
\documentclass{amsart}

\usepackage{amsfonts,amsmath,amssymb,amstext,amsthm}
\usepackage{epic}
\usepackage{graphicx,color}
\usepackage{mathrsfs}

\newcommand{\nwc}{\newcommand}

\newcommand{\proje}{\mathbb{P}}
\newcommand{\cal}{\mathcal}

\nwc{\aaa}{\mathcal{F}}
\nwc{\aab}{\bar{\mathfrak{a}}}
\nwc{\aal}{\mathcal{F}'}
\nwc{\aap}{\mathcal{F}_{P}}

\nwc{\bbb}{\mathfrak{b}}
\nwc{\bbp}{\mathfrak{b}_{P}}
\nwc{\be}{\begin{equation}}
\nwc{\bea}{\begin{eqnarray}}
\nwc{\beq}{$$}

\nwc{\C}{\;\mbox{{\sf I}}\!\!\!C}
\nwc{\cb}{\overline{C}}
\nwc{\ccc}{\mathcal{C}}
\nwc{\cin}{\textbf{(v)}}
\nwc{\cl}{C'}
\nwc{\cp}{\mathcal{C}_{P}}
\nwc{\cpll}{\mathcal{C}_{P'}}
\nwc{\ct}{\tilde{C}}

\nwc{\dd}{\mathcal{L}}
\nwc{\ddd}{\mathfrak{d}}
\nwc{\ddl}{\mathcal{L}'}
\nwc{\dlp}{\delta_{P}}
\nwc{\doi}{\textbf{(ii)}}

\nwc{\ee}{\end{equation}}
\nwc{\eea}{\end{eqnarray}}
\nwc{\enq}{$$}

\nwc{\G}{{\cal G}}
\nwc{\gon}{{\rm gon}}
\nwc{\gtl}{\tilde{g}}
\nwc{\gud}{g^{1}_{2}}
\nwc{\gtu}{g^{1}_{3}}

\nwc{\hhza}{H^{0}(C,\mathfrak{a})}
\nwc{\hua}{h^{1}(C,\mathfrak{a})}
\nwc{\hza}{h^{0}(C,\mathfrak{a})}

\nwc{\kk}{{\rm K}}

\nwc{\lbd}{\lambda}
\nwc{\lif}{L_{\infty}}

\nwc{\mm}{\mathfrak{m}}
\nwc{\mmp}{\mathfrak{m}_{P}}
\nwc{\mpd}{{\mathfrak{m}_{P}}^{2}}

\nwc{\N}{I\!\!N}
\nwc{\nn}{\mathbb{N}}

\nwc{\obp}{\overline{\mathcal{O}}_P}
\nwc{\ocbux}{\oo _{\bar{C}}\langle 1,x\rangle}
\nwc{\oclux}{\oo _{C'}\langle 1,x\rangle}
\nwc{\ocux}{\oo _{C}\langle 1,x\rangle}
\nwc{\ol}{\mathcal{O}'}
\nwc{\oma}{\Omega (\mathfrak{a})}
\nwc{\omo}{\Omega (\mathcal{O})}
\nwc{\oo}{\mathcal{O}}
\nwc{\ooh}{\widehat{\mathcal{O}}}
\nwc{\op}{\mathcal{O}_P}
\nwc{\opc}{\mathcal{O}_{P,C}}
\nwc{\oph}{\widehat{\mathcal{O}}_{P}}
\nwc{\opl}{\mathcal{O}_{P}'}
\nwc{\oplc}{\mathcal{O}_{P,C}'}
\nwc{\opll}{\mathcal{O}_{P'}}
\nwc{\opt}{\tilde{\mathcal{O}}_{P}}
\nwc{\optt}{{\mathcal{O}}_{\tilde{P}}}
\nwc{\oq}{\mathcal{O}_{Q}}
\nwc{\oqt}{\tilde{\mathcal{O}}_{Q}}
\nwc{\ot}{\tilde{\mathcal{O}}}
\nwc{\overop}{\bar{\oo}_{P}}

\nwc{\pb}{\overline{P}}
\nwc{\pgmd}{\mathbb{P}^{g+2}}
\nwc{\pgmu}{\mathbb{P}^{g+1}}
\nwc{\pp}{\mathbb{P}}
\nwc{\prv}{\noindent\textbf{Proof}:}
\nwc{\pt}{\tilde{P}}
\nwc{\ptl}{\tilde{P}}
\nwc{\pum}{\mathbb{P}^{1}}
\nwc{\carta}{\mathfrak{U}}

\nwc{\Q}{\;\mbox{{\sf I}}\!\!\!Q}
\nwc{\qtl}{\tilde{Q}}
\nwc{\qua}{\textbf{(iv)}}

\nwc{\R}{I\!\!R}

\nwc{\sep}{\beq\ast\ \ast\ \ast\enq}
\nwc{\spl}{{S_{P}}'}
\nwc{\spll}{S_{P'}}
\nwc{\ssp}{{\rm S}_{P}}
\nwc{\sss}{{\rm S}}
\nwc{\sys}{\mathcal{L}}

\nwc{\tre}{\textbf{(iii)}}

\nwc{\um}{\textbf{(i)}}

\nwc{\vlp}{\mathcal{V}_{\lambda,P}}
\nwc{\vpt}{v_{\ptl}}
\nwc{\vv}{\mathcal{W}}
\nwc{\vvp}{\mathcal{W}_{P}}

\nwc{\wol}{\ww\cdot\mathcal{O}'}
\nwc{\wpn}{{\omega _{P}}^{n}}
\nwc{\ww}{\omega}
\nwc{\wwp}{\omega _{P}}

\nwc{\Z}{{Z\!\!\!Z}}
\nwc{\zz}{\mathbb{Z}}
\newcommand{\Pic}{\operatorname{Pic}}

\newtheorem{coro}{Corollary}[section]
\newtheorem{defi}[coro]{Definition}
\newtheorem{exem}[coro]{Example}

\newtheorem{lema}[coro]{Lemma}

\newtheorem{rem}[coro]{Remark}
\newtheorem{teo}[coro]{Theorem}

\begin{document}

\title{On Gonality, Scrolls, and Canonical Models of Non-Gorenstein Curves}

\author{Danielle Lara}
\address{Departamento de Matem\'atica, UFV / CAF,
Rodovia LMG 818 km 06,
35690-000 Florestal MG, Brazil}
\email{danilara@ufv.br}

\author{Jairo Menezes Souza}
\address{Unidade Acad\^emica Especial de Matem\'atica e Tecnologia - IMTec/RC/UFG
Av. Dr. Lamartine Pinto de Avelar 1120, St. Universitário,
75704-020,  Catalão - Goiás - Brasil.}
\email{jairoms@ufg.br}

\author{Renato Vidal Martins}
\address{Departamento de Matem\'atica, Instituto de Ci\^encias Exatas, UFMG,
Av. Ant\^onio Carlos 6627,
30123-970 Belo Horizonte MG, Brazil}
\email{renato@mat.ufmg.br}

\subjclass{Primary 14H20, 14H45, 14H51}

\keywords{non-Gorenstein curve, canonical model, gonality, scrolls}

\begin{abstract}
Let  $C$ be an integral and projective curve; and let $C'$ be its canonical model. We study the relation between the gonality of $C$ and the dimension of a rational normal scroll $S$ where $C'$ can lie on. We are mainly interested in the case where $C$ is singular, or even non-Gorenstein, in which case $C'\not\cong C$. We first analyze some properties of an inclusion $C'\subset S$ when it is induced by a pencil on $C$. Afterwards, in an opposite direction, we assume $C'$ lies on a certain scroll, and check some properties $C$ may satisfy, such as gonality and the kind of its singularities. At the end, we prove that a rational monomial curve $C$ has gonality $d$ if and only if $C'$ lies on a $(d-1)$-fold scroll.
\end{abstract}

\maketitle


\section*{Introduction}

 F. Enriques and D. W. Babbage \cite{En,B} proved that a nonhyperelliptic smooth canonical curve is the set theoretic intersection of hyperquadrics, unless it is trigonal or isomorphic to a plane quintic. It can be read off from their works the following statement: ``a regular, integral and projective curve is trigonal if and only if it is isomorphic to a canonical curve which lies on a nonsingular two-dimensional rational normal scroll"

From this perspective, two generalizations of such a characterization are quite natural, and, in fact, were actually done. One may consider higher gonality, or, rather, allow trigonal curves to have singularities.

In the first vein above, one finds, for instance, in Schreyer's \cite[Sec. 6]{Sc}, a detailed study of the relation between a $d$-gonal canonical curve $C$ and the $(d-1)$-fold scroll $S$ it lies on, specially when $d=4,5$ (and $d=3$ as well). This envolves, e.g., verifying the uniqueness of the $g_d^1$; finding the resolution of $C$ inside $S$, and also determining upper and lower bounds for the invariants of $S$ in terms of the genus of $C$.

On the other way, in \cite{RS}, St\"ohr and Rosa devoted their study to the case where both the trigonal curve and the surface scroll are singular. Their results perfectly match the statement above replacing ``regular" by ``Gorenstein".  A key point in their approach was that the linear series could possibly admit non-removable base points. This relaxation on the standard notion of a pencil turned out to be necessary once they proved that a canonical (Gorenstein) curve lying on a cone always meets the vertex, which cannot be removed, otherwise the curve would be hyperelliptic. Actually, systems with non-removable base points appear earlier in the literature, introduced by Coppens in \cite{Cp}. Essentially, one is allowing torsion free sheaves of rank $1$ (rather than bundles) in the definition of linear series,
which, by its turn, allows pencils (rather than morphisms) to compute gonality (see Section \ref{secs21}).

When this study is led to non-Gorenstein curves, soon we come accross the following question: what exactly would be a generalization of the statement above in such a case? The terms ``isomorphic" and ``canonical"  leads now to different ways, depending on which choice we make. Indeed, given a $d$-gonal non-Gorenstein curve $C$, one may look for a $(d-1)$-fold scroll containing: (i) either an isomorphic copy of $C$ by means of a suitable embedding; (ii) or, rather, a curve $C'$ which could be naturally called a ``canonical model" for $C$. 

In Example \ref{exedif} we make few remarks about the difficulties of getting (i) above since the standard methods of inducing inclusions on scrolls by pencils may fail to reach the expected dimension when $C$ is not Gorenstein. On the other hand, if (ii) stands for an option, one may deal, for instance, with the notion of a \emph{canonical model} $C'$ introduced by Rosenlicht in \cite{R} and also studied by S. L. Kleiman along with the third named author in \cite{KM} (see Section \ref{seccan}). Moreover, within this framework, \cite[Thm. 3.4]{KM} which states that  $C'$ is the (non-degenerate) rational normal curve (of minimal degree) if and only if $C$ is either hyperelliptic or rational nearly normal, 
can be rephrased as: $C$ is $2$-gonal if and only if $C'$ lies on a $1$-fold scroll. So the mere formalism of considering rational normal curves as scrolls turns here into the first step of a general result, proved in Theorem \ref{thmrat} for monomial curves. 

We start discussing the general case in Section 2. We show in Theorem \ref{thmscr} how to get an inclusion $C'\subset S$ from any pencil on $C$. In particular we get that $S$ is $(d-1)$-dimensional if $C$ is $d$-gonal. The result extends St\"ohr-Rosa's \cite[Thm. 2.1, Lem 2.3]{RS} from trigonal curves to any gonality, using similar methods based on Andreotti-Mayer's \cite{AM}. We also give an upper bound for the dimension of the singular set of $S$ in terms of some invariants of the pencil, and look for sufficient conditions for $S$ to be in fact singular.

In Section 3 we do the reverse engeneering, that is, we assume $C'$ lies on a given scroll $S$ with prescribed dimension $d$ and  intersection number $\ell$ with a generic fiber of $S$. Varrying $\ell$, we are able to relate some important properties of $C$ with $d$ and other invariants of $S$. Our main concern is gonality, but we also study the number of non-Gorenstein points of $C$ and then check when the curve happens to be Kunz, nearly Gorenstein or nearly normal. These concepts, based on local principles introduced by Barucci and Fr\"oberg in \cite{BF},  got a geometric characterization in \cite[Thms. 5.10, 6.5]{KM} where they were connected to projective and arithmetic normality of the canonical model. They seem to be an essential tool when making first distinctions among non-Gorenstein curves. We summarize the results we got in Theorem \ref{thmth2}, which is a generalization to arbitrary $d$ of \cite[Thms. 2.1, 4.1]{LMM}, proved to $d=2,3$ by Marchesi along with the first and last named author. We close the section with Theorem \ref{thmth3}, which deals with a particular case.

However, we do not obtain a converse for the assertion of prior section. In fact, it will be clear the difficulty of adjusting the arguments of, for instance,  \cite{Sc, BS, CE}, even for small $d$, when the dualizing sheaf fails to be a bundle. On the other hand, in Section 4, we prove that $C$ is $d$-gonal if and only if $C'$ lies on a $(d-1)$-fold scroll if $C$ is a rational monomial curve in Theorem \ref{thmrat}. It generalizes \cite[Thms. 3.3, 5.1]{LMM} which was proved assuming $d=2,3$ and $C$ with just one singular point. The key point is a combinatorial description of $C'$ in Theorem \ref{prprat}, which is an extension of \cite[Prp 3.1]{LMM} and by means of slightly different arguments.

\

\noindent{\bf Acknowledgments.} The third named author is partially supported by CNPq grant number 306914/2015-8.


\section{Preliminaries}

For the remainder, a \emph{curve} is an integral and complete one-dimensional scheme over an algebraically closed ground field. Let $C$ be a curve of (arithmetic) genus $g$ with structure sheaf $\oo_C$, or simply $\oo$, and $k(C)$ the field of rational functions. Let $\pi :\overline{C}\rightarrow C$ be the normalization map, set $\overline{\oo}:=\pi _{*}(\oo _{\overline{C}})$ and call $\ccc:=\mathcal{H}\text{om}(\overline{\oo},\oo)$, the conductor of $\overline{\oo}$ into $\oo$. Let also $\ww_C$, or simply $\ww$, denote the dualizing sheaf of $C$. A point $P\in C$ is said to be \emph{Gorenstein} if $\ww_P$ is a free $\oo_P$-module. The curve is said to be \emph{Gorenstein} if all of its points are so, or equivalently, $\ww$ is invertible. It is said to be \emph{hyperelliptic} if there is a morphism $C\rightarrow\pp^1$ of degree $2$.

\subsection{Linear Systems and Gonality}
\label{secs21}
 
A \emph{linear system of dimension $r$ in $C$} is a set of the form
$$
{\rm L}:=\dd(\aaa ,V):=\{x^{-1}\aaa\ |\ x\in V\setminus 0\}
$$
where $\mathcal{F}$ is a coherent fractional ideal sheaf on $C$ and $V$ is a vector subspace of $H^{0}(\aaa )$ of dimension $r+1$. The sheaf above is given by 
$$
(x^{-1}\aaa)(U):=x^{-1}(\aaa(U))
$$
for any open set $U$ on $C$, which makes sense since $x^{-1}\in k(C)$ and $\aaa(U)\subset k(C)$. In other words, we are using the language of sheaves to the approach of ``divisors by product" of \cite{S}. That is,  linears systems are regularly defined by 
$$
\{D+\text{div}(x)\,|\, x\in V\subset L(D)\}
$$ 
for a (Weil) divisor $D$. On the other hand, here, ``divisors" are ``fractional ideal sheaves" so ``product" plays the role of ``sum" and ``inclusion" the role of ``inequality" (see \cite{S} for more details). So as $x\in L(D)$ if and only if $0\leq D+\text{div}(x)$, similarly, $x\in H^0(\aaa)$ if and only if $x\op\in\aaa(U)$ if and only if $\op\subset x^{-1}\aaa(U)$ for any open set $U$.  So the structure sheaf $\oo$ is $0$ and $x^{-1}\oo$ is $\text{div}(x)$ compared to the divisors theory. This approach happens to be useful specially for describing linear systems with non-removable base points, which we define below. The \emph{degree} of the linear system is the integer
$$
d:=\deg \aaa :=\chi (\aaa )-\chi (\oo)
$$
Note, in particular, that if $\oo\subset\aaa$ then
$$
\deg\aaa=\sum_{P\in C}\dim(\aaa_P/\op).
$$
The notation $g_{d}^{r}$ stands for ``linear system of degree $d$ and dimension $r$". The linear system is said to be \emph{complete} if $V=H^0(\aaa)$, in this case one simply writes ${\rm L}=|\aaa|$. The \emph{gonality} of $C$ is the
smallest $d$ for which there exists a $g_{d}^{1}$ in $C$, or equivalently, the smallest $d$ for which there exists a torsion free sheaf $\aaa$ of rank $1$ on $C$ with degree $d$ and $h^0(\aaa)\geq 2$. A point $P\in C$ is called a \emph{base point of ${\rm L}$} if $x\op\subsetneq\aaa_P$ for every $x\in V$. A base point is called \emph{removable} if it is not a base point of $\dd(\oo\langle V\rangle,V)$, where $\oo\langle V\rangle$ is the subsheaf of the constant sheaf of rational functions generated by all sections in $V\subset k(C)$. So $P$ is a non-removable base point of ${\rm L}$ if and only if $\aaa_P$ is not a free $\op$-module; in particular, $P$ is singular if so.

\subsection{The Canonical Model}
\label{seccan}

Given any integral scheme $A$, any map $\varphi :A\to C$ and a sheaf $\mathcal{G}$ on $C$, set
$$\mathcal{O}_A\mathcal{G}:= \varphi^*\mathcal G/\text{Torsion}(\alpha^*\mathcal G).$$

Given any coherent sheaf $\mathcal{F}$ on $C$ set $\mathcal{F}^n:=\text{Sym}^n\mathcal{F}/\text{Torsion}(\text{Sym}^n\mathcal{F})$. If $\mathcal{F}$ is invertible then clearly $\mathcal{F}^{n}=\mathcal{F}^{\otimes n}$.

Call $\widehat{C}:=\text{Proj}(\oplus\,\ww ^n)$ the blowup of $C$ along $\ww$. If $\widehat{\pi} :\widehat{C}\rightarrow C$ is the natural morphism, set $\widehat{\oo}=\widehat{\pi}_*(\oo_{\widehat{C}})$ and $\widehat{\oo}\ww:=\widehat{\pi}_*(\oo _{\widehat{C}}\ww)$. In \cite[p\,188\,top]{R} Rosenlicht showed that the linear system $\sys(\oo_{\overline{C}}\ww,H^0(\ww))$ is base point free. He considered then the morphism $\kappa :\overline{C}\rightarrow\pp^{g-1}$ induced by it and called $C':=\kappa(C)$ the \emph{canonical model} of $C$. He also proved in \cite[Thm\,17]{R} that if $C$ is nonhyperelliptic, the map
$\pi :\overline{C}\rightarrow C$ factors through a map $\pi' : C'\rightarrow C$. So set $\oo':=\pi'_*(\oo_{C'})$ in this case. In \cite[Dfn\,4.9]{KM} one finds another characterization of $C'$. It is the image of the morphism $\widehat{\kappa}:\widehat{C}\rightarrow\pp^{g-1}$ defined by the linear system $\sys(\oo_{\widehat{C}}\ww,H^0(\ww))$. By Rosenlicht's Theorem, since $\ww$ is generated by global sections, we have that $\widehat{\kappa}:\widehat{C}\rightarrow C'$ is an isomorphism if $C$ is nonhyperelliptic.

Now set $\overline{\oo}\ww:=\pi_*(\oo_{\overline{C}}\ww)$ and take $\lambda\in H^0(\ww)$ such that $(\overline{\oo}\ww)_P=\overline{\oo}_P\lambda$ for every singular point $P\in C$. Such a differential exists because $H^0(\ww)$ generates $\overline{\oo}\ww$ as proved in \cite[p\,188 top]{R}, and because the singular points of $C$ are of finite number and $k$ is infinite since it is algebraically closed. Set
\begin{equation}
\label{equvvv}
\vv=\vv_{\lambda}:=\ww/\lambda
\end{equation}
If so, we have
$$
\ccc_P\subset
\mathcal{O}_P \subset \vvp\subset\oph=\op'\subset\obp
$$
for every singular point $P\in C$, where the equality makes sense if and only if $C$ is nonhyperelliptic.

\begin{defi} \label{defnng}
\emph{Let $P\in C$ be any point. Set
$$
\eta_P:=\dim(\vvp/\op)\ \ \ \ \ \ \ \ \ \ \ \mu_P:=\dim({\widehat{\oo}_{P}}/\vvp)
$$
and also
$$
\eta:=\sum_{P\in C}\eta_P\ \ \ \ \ \ \ \ \ \ \mu:=\sum_{P\in C}\mu_P
$$
In particular, letting $g'$ be the genus of $C'$, we have
\begin{equation}
\label{equget}
g=g'+\eta+\mu
\end{equation}
Following \cite[Prps. 20, 21, 28]{BF}, we call $P$ \emph{Kunz} if $\eta_P=1$ and,
accordingly, we say that $C$ is \emph{Kunz} if all of its non-Gorenstein points are Kunz; we call $P$ \emph{almost Gorenstein} if $\mu_P=1$ and,
accordingly, we say that $C$ is \emph{almost Gorenstein} if all of its points are so. Following \cite[Dfn. 5.7]{KM}, we call $C$ \emph{nearly Gorenstein} if  $\mu=1$, i.e., $C$ is almost Gorenstein with just one non-Gorenstein point. Finally, following \cite[Dfn. 2.15]{KM}, we call $C$ \emph{nearly normal} if $h^0(\oo/\mathcal{C})=1$.}
\end{defi}

\begin{rem}
\label{remrel}
\emph{The relevance of the concepts above are summarized in three properties: (i) $C$ is nearly Gorenstein if and only if it is non-Gorenstein and $C'$ is projectively normal, owing to \cite[Thm. 6.5]{KM}; (ii) $C$  is nearly normal if and only if $C'$ is arithmetically normal, due to \cite[Thm. 5.10]{KM}; (iii) $P$ is Gorenstein if and only if $\eta_P=\mu_P=0$, and $P$ is non-Gorenstein if and only if $\eta_P,\mu_P>0$ by \cite[p. 438 top]{BF}; besides, if $\eta_P=1$ then $\mu_P=1$, by \cite[Prp. 21]{BF}. In particular, a Kunz curve with only one non-Gorenstein point is as close to being Gorenstein as it gets.}
\end{rem}

\subsection{Semigroup of Values} 

Now we establish few notations on evaluations. Given a unibranch point $P\in C$ and any function $x\in k(C)^*$, set
$$
v_{P}(x):=v_{\pb}(x)\in\zz
$$
where $\pb$ is the point of $\cb$ over $P$. The \emph{semigroup of values} of $P$ is
$$
\ssp:=v_{P}(\op ).
$$
We also feature two elements of $\sss$, namely:
\begin{equation}
\label{equaab}
\alpha_P :={\rm min}(\sss\setminus\{ 0\})\ \ \ \ \text{and}\ \ \ \beta_P :={\rm min}(v_P(\cp)).
\end{equation}
The set of \emph{gaps} of $\sss_P$ is
$$
{\rm G}_P:=\nn\setminus\sss_P
$$
and one defines the local invariant
$$
\delta_P:=\#({\rm G}_P)
$$
which agrees with the singularity degree of $P$, that is, $\delta_P=\dim(\obp/\op)$. The \emph{Frobenius vector} of $\sss_P$ is 
$$
\gamma_P :=\beta_P -1
$$ 
and one sets
\begin{equation}
\label{equkkp}
\kk_{P}:=\{ a\in\zz\ |\ \gamma_P -a\not\in\sss_P\}
\end{equation}
whose importance will appear later on. 


\subsection{Scrolls}
\label{secscr}

A \emph{rational normal scroll} $S:=S_{m_1,\ldots,m_d}\subset\pp^{N}$ with $m_1\leq\ldots\leq m_d$,  is a projective variety of dimension $d$ which, after a suitable choice of coordinates, is the set of points $(x_0:\ldots: x_N)\subset\mathbb{P}^N$ such that  the rank of
\begin{equation}
\label{equscr}
\bigg(
\begin{array}{cccc}
x_0 & x_1 & \ldots & x_{m_1-1} \\
x_1 & x_2 & \ldots & x_{m_1}
\end{array}
\begin{array}{c}
\big{|} \\
\big{|}
\end{array}
\begin{array}{ccc}
x_{m_1+1} & \ldots &  x_{m_1+m_2} \\
x_{m_1+2} & \ldots &  x_{m_1+m_2+1}
\end{array}
\begin{array}{c}
\big{|} \\
\big{|}
\end{array}
\begin{array}{c}
\ldots    \\
\ldots  
\end{array}
\begin{array}{c}
\big{|} \\
\big{|}
\end{array}
\begin{array}{cc}
\ldots & x_{N-1}  \\
\ldots & x_N
\end{array}
\bigg)
\end{equation}
is smaller than 2. So, in particular,
\begin{equation}
\label{equnnn}
N=e+d-1
\end{equation}
where $e:=m_1+\ldots+m_d$

Note that $S$ is the disjoint union of $(d-1)$-planes determined by a (parametrized) choice of a point in each of the $d$ rational normal curves of degree $m_d$ lying on complementary spaces on $\mathbb{P}^N$. We will refer to any of these $(d-1)$-planes as a \emph{fiber}. So $S$ is smooth if $m_i>0$ for all $i\in\{1,\ldots,d\}$. 
From this geometric description one may see that
\begin{equation}
\label{equdgs}
\deg(S)=e
\end{equation}
The scroll $S$ can also naturally be seen as the image of a projective bundle. In fact,  taking $\mathcal{E}:=\oo_{\pum}(m_1)\oplus\ldots\oplus\oo_{\pum}(m_d)$, one has a birational morphism  
\begin{equation*}
 \mathbb{P}(\mathcal{E})\longrightarrow S\subset\mathbb{P}^{N}
\end{equation*}
defined by $\oo_{\mathbb{P}(\mathcal{E})}(1)$.  The morphism is such that any fiber of $\mathbb{P}(\mathcal{E})\to\pum$ is sent to a fiber of $S$. It is an isomorphism if $S$ is smooth. One can check, for instance, \cite{EH,Rd} for more details. 

In this case, one may describe the Picard group of the scroll as 
$$
\text{Pic}(S)=\mathbb{Z}H\oplus\mathbb{Z}F
$$
where $F$ is the class of a fiber, and $H$ is the hyperplane class. One may also compute its Chow ring as
\begin{equation}
\label{equchw}
A(S)=\frac{\mathbb{Z}[H,F]}{(F^2\, ,\, H^{d+1}\, ,\, H^{d}F\, ,\, H^{d}-eH^{d-1}F)}
\end{equation}
From (\ref{equdgs}) we get the relations
\begin{equation}
\label{equrel}
H^d=e\ \ \ \ \ \text{and}\ \ \ \ \ H^{d-1}F=1
\end{equation}
The canonical class in $S$ is given by
\begin{equation}
\label{equccs}
K_S=-dH+(e-2)F
\end{equation}
By \cite[Lem. 3.1, Cor. 3.2]{Mr}, we also have the formulae
\begin{equation}
\label{equhhz}
h^0(\oo_S(aH+bF))=
\begin{cases}
\displaystyle (b+1)\binom{a+d-1}{d-1}+e\binom{a+d-1}{d}
&
{\rm if}\  a\geq 0\ \text{and}\ b\geq-am_1
\\
0 & \text{otherwise} 
\end{cases}
\end{equation}
and
\begin{equation}
\label{equhh1}
h^i(\oo_S(aH+bF))=0\ \ \ \ \ \text{if}\ i\geq 1,\  a\geq 0\ \text{and}\ b\geq -(am_1+1)
\end{equation}
which are important in the analisys of effective divisors on $S$.


\section{Canonical Models on Scrolls via Gonality}

In this section we analyze the relation between pencils on a curve and scrolls it might lie on. We are particularly interested when this curve happens to be a canonical model. So let $\dd(\aaa,V)$ be a pencil on a curve $C\subset\pp^n$; and assume $\aaa=\oo_C(D)$ where $D$ is an effective Weil divisor on $C$ supported outside ${\rm Sing}(C)$. Let also $H$ be a hyperplane divisor on $C$ and suppose the curve is linearly normal, that is, the hyperplane sections cut out a complete linear series. In this framework, one may adjust, for instance, Schreyer's survey in \cite[pp. 113-115]{Sc} to the singular case, in order to induce, by means of $\dd$, an inclusion $C\subset S\subset\pp^{n}$ where $S$ is a rational normal scroll.

In fact, consider then the multiplication map
\begin{equation}
\label{eqummp}
V\otimes H^0(\oo_C(H-D))\longrightarrow H^0(\oo_C(H))
\end{equation}
and assume $f:=h^0(\oo_C(H-D))\geq 2$. Then (\ref{eqummp}) yields a matrix in $M_{2\times f}(H^0(\oo_C(H)))$ whose $2\times 2$ minors vanish on $C$. Thus $C$ is contained in the rational normal scroll $S$ defined by these minors, which is such that
\begin{equation}
\label{equdsc}
\dim(S)=h^0(\oo_C(H))-h^0(\oo_C(H-D))
\end{equation}
We may apply this construction to the following example.

\begin{exem}
\label{exedif}
\emph{We already know from \cite{RS} that any trigonal canonical (Gorenstein) curve lies on a $2$-fold scroll.  So consider, for example, the curve 
$$
C=(1:t^3:t^6:t^7:t^9:t^{10})\subset\pp^5
$$
It is a rational monomial curve with just one singular point $P=(1:0:0:0:0:0)$. So its genus agrees with the singularity degree of $P$. Now $\sss_P=\langle 3,7\rangle$, hence $C$ has genus $g=\delta_P=6$. Moreover, $\deg(C)=10$ so $C$ is canonical. One can check that the curve is also trigonal, with the gonality computed by the linear series $|\oo_C(3Q)|$, where $Q=(0:0:0:0:0:1)$. So we may use the theory sketched above to place $C$ within a $2$-fold scroll. We have that $V=\langle 1,t^3\rangle$, $H=10\,Q$ and 
$$
H^0(\oo_C(H-3Q))=H^0(\oo_C(7Q))=\langle 1,t^3,t^6,t^7\rangle\subset k(t)=k(C)
$$
So, according to (\ref{equdsc}), $C$ lies on a $2$-fold scroll $S$ since 
$$
h^0(\oo_C(H))-h^0(\oo_C(7Q))=6-4=2
$$
In order to see this scroll in a way that it is defined by a matrix just like (\ref{equscr}), one may reorder the coordinates as
$$
C=(t^7:t^{10}:1:t^3:t^6:t^9)\subset\pp^5=\{(x_0:x_1:x_2:x_3:x_4:x_5)\}
$$
If so, $S$ is the surface of $\mathbb{P}^5$ cut out by the $2\times 2$ minors of
$$
\bigg(
\begin{array}{cccc}
x_0 & x_2 & x_3 & x_4 \\
x_1 & x_3 & x_4 & x_5
\end{array}
\bigg)
$$
Note that the scroll is of the form $S_{13}$.  Note also that the sections of the first row (when restricted to $C$) generate $H^0(\oo_C(7Q))= \langle 1\rangle \otimes H^0(\oo_C(7Q))$, while the sections of the second row generate $\langle t^3\rangle \otimes H^0(\oo_C(7Q))$ exactly as in the map (\ref{eqummp}).}

\emph{A small disturb on the above example is enough to realize how things get worse when one goes through the non-Gorenstein case. For instance, let now $C$ be a rational monomial curve with just one singularity whose semigroup is the same above up to the removal of $7$. Namely, a linearly normal model for such a curve with smaller possible dimension of the ambient space is
$$
C=(1:t^3:t^6:t^9:t^{10}:t^{12}:t^{13}:t^{14})\subset\pp^7
$$
The singular point is $P=(1:0:\ldots :0)$. Since $\sss_P=\langle 3,10,14\rangle$, the curve has genus $g=\delta_P=7$. And since $\beta_P=12\neq 2\delta_P$, it follows that $P$ is non-Gorenstein, and so is $C$. Moreover, the curve is also trigonal, with gonality computed by $|\oo_C(3Q)|$, where $Q=(0:\ldots:0:1)$, similar to the prior example. However, this pencil does not induce an inclusion of $C$ on a $2$-fold scroll, as would be expected. In fact, we have that $H=14Q$ and 
$$
H^0(\oo_C(H-3Q))=H^0(\oo_C(11Q))=\langle 1,t^3,t^6,t^9,t^{10}\rangle
$$
So, according to (\ref{equdsc}), $C$ lies on a scroll of dimension  
$$
h^0(\oo_C(H))-h^0(\oo_C(11Q))=8-5=3
$$
In order to achieve the expected dimension, one may deal with the canonical model $C'$ instead. Although $C'$ is not isomorphic to $C$ (since the latter is non-Gorenstein), it does preserve this desired property related to the gonality of $C$. Indeed, by Theorem \ref{prprat} ahead, we have that
$$
C'=(1:t^3:t^4:t^6:t^7:t^9:t^{10})\subset\pp^6
$$
which is clearly contained on a $2$-fold scroll of the form $S_{23}$. This inclusion of the canonical model $C'\subset S_{23}$ can be obtained, for instance, dealing with the pullback $|(\pi')^*(\oo_{C}(3Q))|$ and following the same steps above.}
\end{exem}

A general result relating pencils and scrolls for an arbitrary rational singular curve (not necessarily monomial)  with prescribed degree and ambient space dimension can be found, for instance, in \cite{CFM2} (motivated by \cite{CFM1}). However, if the concern is gonality (and hence canonical models), then the way of inducing an inclusion of $C'$ on a scroll by means of a $g_d^1$ on $C$ should be slightly modified. 
 A more intrinsic approach is required. This was done by St\"ohr and Rosa in \cite{RS} based on Andreotti and Mayer's \cite{AM} in the case $d=3$. The following result uses similar arguments to extend \cite[Thm. 2.1, Lem 2.3]{RS} to higher degree.  

\begin{teo}
\label{thmscr}
Let $C$ be an integral and projective curve of arithmetic genus $g$ over an algebraically closed field $k$, and $C'$ be its canonical model. Let $\dd(\aaa,V)$ be a $g_d^1$ on $C$. Then $\dd$ induces an inclusion $C'\subset S\subset\pp^{g-1}$ where $S$ is an $m$-dimensional rational normal scroll, such that:
\begin{itemize}
\item[(I)] $m\leq d-1$, with equality if and only if $\dd$ is complete;
\item[(II)] If $S$ is singular then
$$
\dim({\rm Sing}(S))< d-2h^0(\aaa)+1+\frac{\deg(\aaa\cap x^{-1}\aaa)}{2}
$$
with $x\in V\setminus k$; in  particular, 
\begin{itemize}
\item[(i)] if the expression above is not extrictly positive, then $S$ is smooth;
\item[(ii)] if $\dd$ is complete and base point free, then $\dim({\rm Sing}(S))\leq m-3$;
\end{itemize}
\item[(III)] If $C$ is Gorenstein and $\dd$ is complete with a base point then $S$ is singular.
\end{itemize}
\end{teo}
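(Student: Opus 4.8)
The plan is to mimic the Andreotti–Mayer construction used by Stöhr–Rosa in \cite{RS}, but carried out on the canonical model $C'$ (equivalently on the blowup $\widehat C$) rather than on $C$, and for an arbitrary $g^1_d$. Write $\dd=\dd(\aaa,V)$ with $V=\langle 1,x\rangle$ after scaling, so that $x\in V\setminus k$. The first step is to set up, for each $n\ge 0$, the multiplication maps
$$
V^{\otimes n}\otimes H^0(\ww/\aaa^n)\longrightarrow H^0(\ww),
$$
or rather their symmetric/saturated versions, using $\aaa^n=\text{Sym}^n\aaa/\text{Torsion}$ as in Section~\ref{seccan}. Pulling $\ww$ back to $\widehat C$, where it becomes a line bundle generating the embedding $\widehat\kappa:\widehat C\to C'\subset\pp^{g-1}$, one gets that the images of these maps span subspaces $W_n\subset H^0(\ww)$ with $W_0\subset W_1\subset\cdots$, and the point is that the associated filtration of $\pp^{g-1}$ by the projectivizations of the $W_n$ exhibits $C'$ as lying on the rational normal scroll $S$ swept out by the linear spans of the divisor classes $\widehat\kappa_*(\text{fibers of }\dd)$. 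The dimension formula $\dim S=h^0(\ww)-h^0(\ww/\aaa)$, which by Riemann–Roch and $\deg\aaa=d$ gives $\dim S=g-(g-1-d+h^0(\aaa))=d+1-h^0(\aaa)$; wait—more carefully, $h^0(\ww/\aaa)$ is computed so that $\dim S\le d-1$, with equality precisely when $h^0(\aaa)=2$, i.e.\ when $\dd$ is complete. This proves (I). (The subtlety here, and where I expect to spend real care, is the bookkeeping that replaces $h^0(\oo_C(H-D))$ in \eqref{equdsc} by the correct cohomological quantity on $\widehat C$ when $\aaa$ is only torsion free; one must check that passing to $\widehat C$ does not change $H^0(\ww)$ and does make $\aaa$ locally free, so that the Schreyer-type computation in \cite[pp.~113--115]{Sc} applies verbatim.)

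For (II), the dimension of $\text{Sing}(S)$ equals (number of $m_i$ equal to $0$) $-1$, equivalently $\dim S - (\text{number of }m_i>0)$, and each vanishing $m_i$ corresponds to a ``jump'' in the filtration $W_0\subsetneq W_1$ being as large as possible. One estimates $\dim W_0$, i.e.\ $h^0$ of the piece of $\ww$ coming from the base locus and the ``constant'' row, from below; concretely $W_1/W_0$ receives $H^0(x\cdot(\ww/\aaa))$ and the relevant inequality comes from comparing $\deg\aaa=h^0(\aaa)+d-... $ hmm—rather from the fact that the self-intersection $\aaa\cap x^{-1}\aaa$ controls how much the two ``rows'' of the scroll matrix overlap. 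The degree $\deg(\aaa\cap x^{-1}\aaa)$ enters exactly as in \cite[Lem.~2.3]{RS}: it bounds the common part of $\aaa$ and its translate by $x$, hence bounds $m_1$ from below, hence bounds $\dim\text{Sing}(S)$ from above by $d-2h^0(\aaa)+1+\tfrac12\deg(\aaa\cap x^{-1}\aaa)$ after using $h^0(\aaa)\ge 2$. Item (i) is then immediate: if that bound is $\le 0$ there can be no vanishing $m_i$. For (ii), completeness forces $h^0(\aaa)=2$ so $m=d-1$ and the bound reads $\dim\text{Sing}(S)<1+\tfrac12\deg(\aaa\cap x^{-1}\aaa)$; base-point-freeness makes $\aaa\cap x^{-1}\aaa$ have degree at most $... $ — here one argues that for a base-point-free complete pencil the intersection sheaf has degree $0$ (the two divisors $D_0=\text{div}_0(1)$ and $D_\infty=\text{div}_0(x)$ in the pencil are disjoint in the base-point-free case), so the bound collapses to $\dim\text{Sing}(S)<1$, i.e.\ $\le 0 = m-(m)$... that is not $m-3$. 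I will need to re-derive the constant: plausibly one instead uses two \emph{generic} members, and genericity plus base-point-freeness forces $m_1,m_2,m_3>0$, giving $\dim\text{Sing}(S)\le m-3$. Pinning down this genericity argument is the second place where care is needed.

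For (III), assume $C$ Gorenstein, so $\ww=\oo_C(K)$ is a line bundle and $C'=C$ canonically embedded; assume $\dd$ complete with a base point $P$. By (I), $S$ is $(d-1)$-dimensional. If $S$ were smooth, then $S=S_{m_1,\dots,m_{d-1}}$ with all $m_i\ge 1$; one shows that a smooth scroll of this type forces the pencil cut on $C$ by the ruling to be base-point-free — indeed the ruling of a smooth scroll restricts to $C$ in a base-point-free $g^1$, and in the Gorenstein (hence $C\cong C'$) case this $g^1$ is exactly $\dd$ up to the identification, contradicting that $P$ is a base point. More precisely: the fibers of $S$ meet $C$ in the divisors of $\dd$, and on a smooth scroll every fiber is a genuine $\pp^{d-2}$ moving in a base-point-free pencil of hyperplane sections of $S$, so its trace on $C$ is base-point-free. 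Hence $S$ must be singular. I expect (III) to be the quickest part, the main obstacle being entirely in (I)--(II): correctly transporting the classical scroll construction from the smooth/Gorenstein setting to $\widehat C$ and tracking the torsion-free sheaf $\aaa$ through the symmetric powers, plus the genericity subtlety in (ii).
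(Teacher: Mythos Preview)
Your setup diverges from the paper's at the first step, and this is where the real gap lies. You propose to build filtrations $W_n\subset H^0(\ww)$ via multiplication maps $V^{\otimes n}\otimes H^0(\ww/\aaa^n)\to H^0(\ww)$, and to make $\aaa$ locally free by passing to $\widehat C$. But $\widehat C$ is the blowup of $C$ along $\ww$, not along $\aaa$, so there is no reason the pullback of $\aaa$ becomes invertible; moreover the symmetric powers $\aaa^n$ of a merely torsion-free sheaf do not behave well enough to make your filtration argument go through. The paper avoids all of this by working on the $H^1$ side: one considers the map $\varphi:H^1(\aaa)\to H^1(\oo_C)$, $f\mapsto xf$, notes it is non-stable, and applies the Andreotti--Mayer decomposition \cite[Lem.~5, Cor.~1]{AM} directly to $H^1(\oo_C)\cong H^0(\ww)$. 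This produces, in one stroke, a basis of $H^0(\ww)$ of the form $\{x^jf_i\}\cup\{h_i\}$ that exhibits $C'$ on the scroll $S_{m_1,\ldots,m_r,0,\ldots,0}$, and yields $\dim S=h^1(\oo_C)-h^1(\aaa)=d-(h^0(\aaa)-1)$ by injectivity of $\varphi$. No passage to $\widehat C$, no torsion-free subtleties.

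For (II)(ii) your arithmetic slipped: with $h^0(\aaa)=2$ the bound is $d-2\cdot 2+1+\tfrac12\deg(\aaa\cap x^{-1}\aaa)=d-3+\tfrac12\deg(\aaa\cap x^{-1}\aaa)$, not $1+\tfrac12\deg(\cdots)$. Base-point-freeness gives $\aaa\cap x^{-1}\aaa=\oo$ (degree $0$), so $\dim\text{Sing}(S)<d-3=m-2$, hence $\le m-3$ on the nose. No genericity argument is needed; your detour was caused by the arithmetic error. The paper's proof of the strict inequality in (II) uses $\G:=x^{-1}\aaa$, the identity $H^1(\aaa)\cap H^1(\G)=H^1(\aaa+\G)$ coming from the exact sequence $0\to\aaa\cap\G\to\aaa\oplus\G\to\aaa+\G\to 0$, and then the Clifford-type bound $h^1(\aaa+\G)<g-\tfrac12\deg(\aaa+\G)$ from \cite[App.]{EHKS}.

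For (III) your geometric argument (smooth scroll $\Rightarrow$ fibers cut a base-point-free pencil on $C\cong C'$) is plausible but would need justification that the fiber pencil on $C'$ is indeed $\dd$. The paper instead argues algebraically: from $\mathcal{H}\text{om}(\aaa\cap\G,\ww)=\mathcal{H}\text{om}(\aaa,\ww)+\mathcal{H}\text{om}(\G,\ww)$ one gets $H^1(\aaa)+H^1(\G)\subset H^1(\aaa\cap\G)$, hence $s\ge\deg(\aaa\cap\G)+1-h^0(\aaa\cap\G)$; completeness forces $h^0(\aaa\cap\G)=1$, and a Gorenstein base point forces $\deg(\aaa\cap\G)>0$, so $s>0$.
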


\begin{proof}
Without loss in generality one may write $V=\langle 1,x\rangle\subset H^0(\aaa)\subset k(C)$. Consider then the map
\begin{gather*}
\begin{matrix}
\varphi: & H^1(\aaa) & \longrightarrow & H^1(\oo_C) \\
    & f           & \longmapsto     & xf
\end{matrix}
\end{gather*}
defined for any $f\in{\rm Hom}(\aaa,\ww)$. Note that $\varphi$ is non-stable, i.e., for any subspace $W\in H^1(\aaa)$, if $\varphi(W)\subset W$, then $W=0$, because $\{x^if\}_{i\in\nn}$ form a linear independent set (viewd in $k(C)$). As $\oo_C$ is a subsheaf of $\aaa$, we have that $H^1(\aaa)\subset H^1(\oo_C)$. So, by \cite[Lem. 5, Cor. 1]{AM}, one may write
$$
H^1(\aaa)=\bigoplus_{i=1}^{r}\bigg(\bigoplus_{j=0}^{m_i-1}kx^jf_i \bigg)
$$
and
\begin{equation}
\label{equwws}
H^0(\ww)\cong H^1(\oo_C)=\bigg(\bigoplus_{i=1}^{r}\bigg(\bigoplus_{j=0}^{m_i}kx^jf_i \bigg)\bigg)\oplus\bigg(\bigoplus_{i=1}^{s}kh_i \bigg)
\end{equation}
where $f_i\in H^1(\aaa)\setminus\varphi( H^1(\aaa))$ for $1\leq i\leq r$ and $h_i\in H^1(\oo_C)\setminus(H^1(\aaa)+\varphi(H^1(\aaa)))$ for $1\leq i\leq s$. So by (\ref{equwws}) the canonical model $C'$ is the image of the morphism
$$
(f_1:\ldots:x^{m_1}f_1:\ldots\ldots:f_r:\ldots:x^{m_r}f_r:h_1:\ldots:h_s):\cb\longrightarrow\pp^{g-1}
$$
and, in particular,
$$
C'\subset S:=S_{m_1,\ldots,m_r,0,\ldots,0}\subset \pp^{g-1}
$$
Besides, 
\begin{align*}
\dim(S) &=r+s =\dim(H^1(\oo_C)/\varphi(H^1(\aaa)))\\
            &=h^1(\oo_C)-h^1(\aaa)\\
           &=g-(h^0(\aaa)-\deg(\aaa)-1+g)\\
           &=\deg(\aaa)-(h^0(\aaa)-1)
\end{align*}
where the third equality holds since $\varphi$ is injective. Now $\deg(\aaa)=d$ and 
$$
\dim(\dd)=2\leq h^0(\aaa)
$$
with equality holding if and only if $\dd$ is complete. So (I) is proved.

To prove (II), set $\G:=x^{-1}\aaa$, and note that $\varphi(H^1(\aaa))=xH^1(\aaa)=H^1(\G)$.  Now, for any subsheaves $\aaa$ and $\G$  of the constant sheaf $\mathcal{K}$ of rational functions of the curve $C$, one may form the short exact sequence
\begin{equation}
\label{equext}
0\longrightarrow \aaa\cap\G\longrightarrow \aaa\oplus\G\longrightarrow \aaa+\G\longrightarrow 0
\end{equation}
and apply the left exact functor $H^0(\mathcal{H}{\rm om}(\bullet,\ww))$ to get
$$
H^0(\mathcal{H}{\rm om}(\aaa,\ww))\cap H^0(\mathcal{H}{\rm om}(\G,\ww))=H^0(\mathcal{H}{\rm om}(\aaa+\G,\ww))
$$
from which we conclude that 
\begin{equation}
\label{equhh1}
H^1(\aaa)\cap H^1(\G)=H^1(\aaa+\G)
\end{equation}
From (\ref{equext}) we also have that
$
\chi(\aaa)+\chi(\G)=\chi(\aaa+\G)+\chi(\aaa\cap\G)
$
which yields
\begin{equation}
\label{equdeg}
\deg(\aaa+\G)=\deg(\aaa)+\deg(\G)-\deg(\aaa\cap\G)
\end{equation}
For $\aaa$ as in the statement of the theorem, and $\G$ as priorly set, it is easily seen by the definition of the latter that 
\begin{equation}
\label{equhgf}
h^0(\G)=h^0(\aaa)\ \ \ \ \ \ \text{and}\ \ \  \ \ \deg(\G)=\deg(\aaa)=d
\end{equation}
With this in mind, we have
\begin{align*}
\dim({\rm Sing}(S))&=s-1=\dim(H^1(\oo_C)/(H^1(\aaa)+\varphi(H^1(\aaa)))-1\\
                            &=\dim(H^1(\oo_C)/(H^1(\aaa)+H^1(\G)))-1\\
                           &=h^1(\oo_C)-(h^1(\aaa)+h^1(\G)-h^1(\aaa+\G))-1\\
                           &=g-(2(h^0(\aaa)-d-1+g)-h^1(\aaa+\G))-1\\
                           &=2(d-h^0(\aaa))-g+1+h^1(\aaa+\G)\\
                          &< 2(d-h^0(\aaa))-g+1+g-\frac{\deg(\aaa+\G)}{2}\\
                          &= 2(d-h^0(\aaa))+1-\bigg(\frac{2d-\deg(\aaa\cap\G)}{2}\bigg)\\
                         &=d-2h^0(\aaa)+1+\frac{\deg(\aaa\cap \G)}{2}
\end{align*}
where the fourth equality holds from (\ref{equhh1}), the fifth is due to (\ref{equhgf}) and Riemann-Roch, the unequality follows from \cite[App.]{EHKS}, and the seventh equality owes to (\ref{equdeg}). So item (II).(i) follows. Now, if $\dd$ is base point free, then $\aaa\cap\G=\oo$, and if it is complete, then $h^0(\aaa)=2$, so (II).(ii) follows as well.

To prove (III), the natural isomorphism
$
\mathcal{H}{\rm om}(\aaa\cap\G,\ww)=\mathcal{H}{\rm om}(\aaa,\ww)+\mathcal{H}{\rm om}(\G,\ww)
$
yields the inclusion $H^1(\aaa)+H^1(\G)\subset H^1(\aaa\cap\G)$, thus
\begin{align*}
\dim(H^1(\oo_C)/(H^1(\aaa)+H^1(\G))) &\geq \dim(H^1(\oo_C)/H^1(\aaa\cap\G))\\
                                                                   &=g-(h^0(\aaa\cap\G)-\deg(\aaa\cap\G)-1+g)\\
                                                                   &=\deg(\aaa\cap\G)+1-h^0(\aaa\cap\G)
\end{align*}
If $\dd$ is complete, then $h^0(\aaa)=2$; but since $H^0(\aaa\cap\G)\subset H^0(\aaa)$ and $x\not\in H^0(\aaa\cap\G)$ it follows that $h^0(\aaa\cap G)=1$. On the other hand, if $\dd$ is has a base point $P$, which is Gorenstein since $C$ is so, then $\aaa_P\cap\G_P\supsetneq \oo_P$, thus $\deg(\aaa\cap\G)>0$ and hence $s>0$, i.e., $S$ is singular.
\end{proof}

As a consequence of the above result we have the following.

\begin{coro}
\label{corida}
Let $C$ be an integral and projective curve of gonality $d$ and $C'$ be its canonical model. Then $C'$ lies on a $(d-1)$-fold scroll.
\end{coro}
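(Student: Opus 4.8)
The plan is to deduce this directly from Theorem~\ref{thmscr}; the only thing one has to add is that the gonality of $C$ is realized by a \emph{complete} $g^1_d$, and then apply part (I) of that theorem. So let $\aaa$ be a torsion free rank one sheaf on $C$ with $\deg(\aaa)=d$ and $h^0(\aaa)\geq 2$, where $d$ is the gonality, and realize $\aaa$ as a coherent fractional ideal sheaf (possible since $\aaa$ has rank one). I claim $h^0(\aaa)=2$, so that the complete system $\dd:=|\aaa|$ is a $g^1_d$.

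Suppose instead that $h^0(\aaa)\geq 3$. Since $C$ is integral, its smooth locus $C\setminus{\rm Sing}(C)$ is a nonempty open set; fix a point $P$ in it, so that $P$ is a Cartier divisor and $\mmp$ is invertible at $P$. Put $\aaa':=\mmp\aaa\subset\aaa$ (equivalently $\aaa'=\aaa\otimes\oo_C(-P)$), which is again a torsion free rank one fractional ideal sheaf. The exact sequence $0\to\aaa'\to\aaa\to\aaa/\aaa'\to 0$, in which $\aaa/\aaa'$ is a length one skyscraper sheaf at $P$ (as $\aaa_P$ is free of rank one over the DVR $\oo_{C,P}$), gives $\deg(\aaa')=\deg(\aaa)-1=d-1$ and, from the induced sequence on global sections, $h^0(\aaa')\geq h^0(\aaa)-1\geq 2$. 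Thus $\aaa'$ is a torsion free rank one sheaf of degree $d-1$ with at least two global sections, contradicting the minimality of the gonality $d$. Hence $h^0(\aaa)=2$ and $\dd=|\aaa|$ is a complete $g^1_d$ on $C$.

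Now apply Theorem~\ref{thmscr} to $\dd$: it induces an inclusion $C'\subset S\subset\pp^{g-1}$ with $S$ an $m$-dimensional rational normal scroll, and since $\dd$ is complete, part (I) yields $m=d-1$, which is exactly the assertion. All the substance is already in Theorem~\ref{thmscr}; the only point in this corollary requiring (minor) care is the reduction above — checking that twisting $\aaa$ down by a smooth point drops the degree by exactly one while dropping $h^0$ by at most one — and this is immediate from the skyscraper sequence, smooth points of an integral curve being Cartier. (Alternatively one may simply invoke the standard fact that the gonality is always computed by a complete pencil.)
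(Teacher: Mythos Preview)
Your proof is correct and follows essentially the same approach as the paper: both reduce to showing that a pencil computing the gonality is complete by twisting $\aaa$ down by a smooth point, using minimality of $d$ to bound $h^0$ of the twist, and then invoking Theorem~\ref{thmscr}(I). The only cosmetic difference is that the paper argues directly via the Euler characteristic of the skyscraper sequence (getting $h^0(\aaa(-P))\leq 1$ and hence $h^0(\aaa)=2$), whereas you phrase the same step as a contradiction from $h^0(\aaa)\geq 3$.
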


\begin{proof}
By item (I) of the prior theorem, it suffices to prove that if a linear series $\dd$ computes the gonality of $C$, then it is complete. So write $\dd=(\aaa,V)$ where $\aaa$ is torsion free of rank $1$. Choose any regular point $P\in C$ and consider the sequence
$$
0\longrightarrow \aaa(-P)\longrightarrow \aaa \longrightarrow \aaa/\aaa(-P)\longrightarrow 0
$$
Taking Euler characteristic yields 
$$
\big(h^0(\aaa)-h^0(\aaa(-P))\big)+\big(h^1(\aaa(-P))-h^1(\aaa)\big)=1
$$
Now both summands are nonnegative; besides, $h^0(\aaa)\geq \dim(V)=2$ and we also have that $h^0(\aaa(-P))\leq 1$ because this sheaf is of degree $d-1$ and the gonality of $C$ is $d$. Thus $h^0(\aaa)=2$ as desired, i.e., $\dd$ is complete.
\end{proof}

\section{Gonality via Canonical Models on Scrolls}

The point of departure of this section is the assumption that the canonical model $C'$ is a complete intersection inside a smooth scroll $S$. From this, we derive some properties about $C$ in terms of invariants of $S$ and $C'$.  

To begin with, let $X$ be a curve lying on a $d$-dimensional smooth variety $S$ as
$$
X=D_1\cdot\ldots\cdot D_{d-1}
$$
where the $D_i$'s are divisors on $S$, and set $\mathcal{E}:=\oplus_{i=1}^{d-1}\oo_S(D_i)$.
In order to compute the arithmetic genus of $X$, consider its resolution inside $S$ via Koszul Complex given by
$$
0\rightarrow\bigwedge^{d-1}\mathcal{E}^{\vee}\to\ldots\to
\bigwedge^2\mathcal{E}^{\vee}\to\mathcal{E}^{\vee}\to\oo_S\to\oo_X\to 0
$$
It yields
\begin{equation}
\label{equerr}
p_a(X)=1-\chi(\oo_S)+\chi(\mathcal{E}^{\vee})-\chi(\wedge^2\mathcal{E}^{\vee})+\ldots+ (-1)^{d}\chi(\wedge^{d-1}\mathcal{E}^{\vee})
\end{equation}
Now
\begin{equation}
\label{equext}
\bigwedge^j\mathcal{E}^{\vee}=
\bigoplus_{1\leq i_1<\ldots <i_j \leq d-1}\oo_S(-D_{i_1}-\ldots -D_{i_j}) 
\end{equation}
and for an arbitrary divisor $D\in{\Pic}(S)$, Hirzebruch-Riemann-Roch Theorem yields
\begin{equation}
\label{equeul}
\chi(\oo_S(D))=t_d+ D\cdot t_{d-1} + \ldots +\frac{1}{(d-2)!}D^{d-2}\cdot t_2+\frac{1}{(d-1)!}D^{d-1}\cdot t_1+\frac{1}{d!}D^{d}
\end{equation}
where $t_i$ is the $i$th degree component of the Todd class of the tangent bundle $\mathcal{T}_S$.

On the other hand, note that whatever are  $\alpha_1,\ldots ,\alpha_s\in A$, a commutative ring, we have that
\begin{equation}
\label{equcrg}
\sum_{j=1}^{s}\bigg((-1)^{j-1}\sum_{1\leq i_1<\ldots <i_j \leq s}(\alpha_{i_1}+\ldots+\alpha_{i_j})^r\bigg) = 
\begin{cases}
0    & r<s\\
(-1)^{s+1}s!\,\alpha_1...\,\alpha_s& r=s \\
\displaystyle\frac{(-1)^r r!}{2}\sum_{i=1}^{s}\alpha_1...\alpha_i^2...\alpha_s &r=s+1 
\end{cases} 
\end{equation}
So apply (\ref{equeul}) to (\ref{equext}) using the linearity of the Euler characteristic. Then write the sum envolving the exterior powers of $\mathcal{E}^{\vee}$ in (\ref{equerr}) as a linear polynomial on the variables $t_i$ and apply (\ref{equcrg}) to each of its coefficients. If so, we are reduced to
$$
p_a(X) = 1 - \chi(\oo_S)+\bigg (t_d-D_1 \cdot\ldots\cdot D_{d-1}\,t_{1}+\frac{\sum_{i=1}^{d-1}D_1\cdot\ldots\cdot D_{i}^2\cdot\ldots D_{d-1}}{2}\bigg) 
$$
Now recall that $t_d=\chi(\oo_S)$ and $t_1=c_1(\mathcal{T}_S)/2$, which yields the formula
\begin{equation}
\label{equger}
2p_a(X)-2= D_1 \cdot\ldots\cdot D_{d-1}\cdot\big(D_1+\ldots+D_{d-1}-c_1(\mathcal{T}_S)\big)
\end{equation}

Now assume $S$ is a rational normal scroll and recall the settings of Section \ref{secscr}. To any curve $X\subset S$ consider the parameter
\begin{equation}
\label{equlll}
\ell:=X\cdot F
\end{equation}
which will be widely studied here. If $X$ is a complete intersection inside $S$ write
$$
D_i=a_iH+b_iF
$$
as above. Then, from (\ref{equchw}) and (\ref{equrel}), we easily get
\begin{equation}
\label{equcll}
\ell=a_1\cdot\ldots\cdot a_{d-1}
\end{equation}
and one may also use these relations to compute $X\cdot H$ and obtain
\begin{equation}
\label{equcdd}
\deg(X)=a_1\cdot\ldots\cdot a_{d-1}\cdot e+\sum_{i=1}^{d-1}a_1\cdot\ldots\cdot b_{i}\cdot\ldots\cdot a_{d-1}
\end{equation}
For the remainder and for the sake of simplicity, write
$$
a:= a_1+\ldots+a_{d-1}\ \ \ \ \ \ \ \ \ \ \ \ b:= b_1+\ldots+b_{d-1}
$$
If so, we will refer to $X$ as being of \emph{$(a,b)$-type}, and note that
$$
D_1+\ldots+D_{d-1}=aH+bF
$$
while from (\ref{equccs}) we have that
$$
c_1(\mathcal{T}_S)=dH+(2-e)F
$$
So to compute the arithmetic genus of $X$, we get
\begin{align*}
D_1 \cdot\ldots\cdot D_{d-1}\cdot c_1(\mathcal{T}_S) & =(de+2-e)\,a_1\cdot\ldots\cdot a_{d-1}+d\,\sum_{i=1}^{d-1}{a_1\cdot\ldots\cdot b_i \cdot\ldots\cdot a_{d-1}}\\
                                                                          & = d\deg(X)+(2-e)\ell
\end{align*}
and similarly
$$
D_1 \cdot\ldots\cdot D_{d-1}\cdot (D_1+\ldots+D_{d-1})=a\deg(X)+ b\,\ell
$$
which, combined with (\ref{equger}), yield
\begin{equation}
\label{equgsc}
2p_a(X)-2= \deg(X)(a-d)+\ell(b+e-2)
\end{equation}
that is a helpful tool to be used here applied to canonical models of curves.

In the following result we generalize, to arbitrary dimension, \cite[Thms. 2.1, 4.1]{LMM} which study curves $C$ for which the canonical model $C'$ lies on a $d$-fold scroll for $d=2,3$. The idea is to put the statement within a way that both cases can be deduced from general formulae involving $d$; and the reason why focusing just on the cases $\ell=1,2,d,d+1$ in item (II) will be clear after Theorem \ref{thmth3}.

\begin{teo} 
\label{thmth2}
Let $C$ be a nonhyperelliptic curve of genus $g\geq d+3$, whose canonical model $C'$, of genus $g'$, lies on a smooth $d$-dimensional rational normal scroll $S$ as a complete intersection of $(a,b)$-type. Let $\ell$ be the number of points of $C'$ in a generic fiber of the scroll. Then the following hold:

\begin{itemize}
\item[(I)] $\gon(C)\leq \ell+{g-g'}$
\item[(II)] If $b=-(g-(d+2))$ then either $C$ is Gorenstein at most tetragonal or, else, the equality holds if and only if $\ell=2$ and $C'$ is elliptic. Otherwise
$$
\ell= \frac{(a-d-1)(2g-2-\eta)+\eta +2\mu}{b-d-2+g}
$$
in particular, $\ell\leq 3$ if $d=2$. 
\item[(III)] The following hold:
\begin{itemize}
\item[(i)] if $\ell=1$ then $C'\cong\proje^{1}$ and $C$ is rational with all singular points non-Gorenstein; 

\item[(ii)] if $\ell=2$ and  $C'$ is not elliptic, then $C'\cong\proje^1$ iff $b=-(g-(d+1))$ or else $b\geq -(g-(d+3))$.

\item[(iii)] if $\ell=d$, then $b=-(g-(d+2))-(\eta+2\mu +\tau(2g-2-\eta))/d$; where $\tau\in\{-1,0,1,\ldots,d-3\}$. In particular, if $d=2$ then $C$ is nearly Gorenstein; and if $d=3$, then $C$ is Kunz with just one non-Gorenstein point iff $b=-(g-4)$.
\item[(iv)] if $\ell=d+1$, then $b=-(g-(d+2))-(\eta+2\mu +\tau(2g-2-\eta))/(d+1)$; where $\tau\in\{0,1,\ldots,d-2\}$. In particular, if $d=2$ then $C$ is almost Gorenstein if and only if it is Kunz; and if $d=3$, then $C$ is Kunz with just one non-Gorenstein point iff $b=-(3g-2)/2$ with $g$ even.
\end{itemize}
\item[(IV)] If $C$ is non-Gorenstein and $\ell\geq 3$, writing $S=S_{m_1,\ldots,m_d}$ we have
$$
\frac{g-d-1}{\ell+d-2}+\frac{\nu (g-1)+3-\ell}{\ell(d+\ell-2)}\leq m_1\leq\ldots\leq m_{d}\leq\frac{2g-2-\eta}{\ell}  
$$
where $\nu:=(d-1)\sqrt[d-1]{\ell}-d-1$; unless $a=d+1$ where $m_1\geq (g-d-1)/(d+1)$;  in particular, both formulae extend the case $d=2$ where $m_1\geq (g-3)/3$.  
\end{itemize}
\end{teo}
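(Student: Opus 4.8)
The plan is to run everything off formula~(\ref{equgsc}) applied to $X=C'$. First I would record the three facts about the canonical model that make this work, all available from \cite{KM} together with Section~\ref{seccan}: that $p_a(C')=g'$; that $\deg(C')=2g-2-\eta$ (because $\oo_{C'}(1)=\oo_{\widehat C}\ww$ and $\chi(\widehat{\oo}\ww)=g-1+\mu$, which with (\ref{equget}) yields the degree); and that, $C'$ — hence $S$ — being non-degenerate in $\pp^{g-1}$, one has $N=g-1$ in (\ref{equnnn}), so $e=\deg S=g-d$. Substituting these and $g'=g-\eta-\mu$ from (\ref{equget}) into (\ref{equgsc}) and simplifying, I obtain the master identity
$$\ell\,(b+g-d-2)=(d+1-a)(2g-2-\eta)-\eta-2\mu,$$
which I would solve for $\ell$ whenever $b\neq-(g-(d+2))$. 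Every numerical claim in (II) and (III)(iii),(iv) then follows by feeding into this identity the elementary facts that $\ell=a_1\cdots a_{d-1}$ and $a=a_1+\cdots+a_{d-1}$ with every $a_i\ge1$ — so $\ell=1\Rightarrow a=d-1$, $\ell=2\Rightarrow a=d$, $\ell=d\Rightarrow d\le a\le 2d-2$, $\ell=d+1\Rightarrow a\in\{d+1,\dots,2d-1\}$ (in the last two cases $\tau:=a-(d+1)$ runs over $\{-1,\dots,d-3\}$ resp. $\{0,\dots,d-2\}$) — together with $\eta,\mu\ge0$, $\eta+\mu=g-g'$, and Remark~\ref{remrel}(iii) ($\mu=0\Rightarrow\eta=0$; $\eta_P=1\Rightarrow\mu_P=1$). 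For instance $b=-(g-(d+2))$ kills the left side, and $g\ge d+3$ makes $2g-2-\eta>0$, leaving $d-1\le a\le d+1$; the genus bound excludes $a=d-1$, $a=d+1$ forces $\eta=\mu=0$ (so $C$ is Gorenstein with $\ell\in\{3,4\}$, at most tetragonal by (I)), and $a=d$ forces $g'=1$ (so $C'$ is elliptic and $\ell=2$) — the dichotomy of (II). The $d=2,3$ corollaries in (III)(iii),(iv) I would get by using the identity to pin $(\eta,\mu)$ down exactly: e.g. for $d=3$, $\ell=d$ one has $a=4$, $\tau=0$, and $b=-(g-4)\Leftrightarrow\eta+2\mu=3\Leftrightarrow(\eta,\mu)=(1,1)$, which by Definition~\ref{defnng} says precisely that $C$ is Kunz with a single non-Gorenstein point.

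For~(I) I would pass through the morphism $\pi'\colon C'\to C$, available since $C$ is nonhyperelliptic (Rosenlicht, \cite{R}). The ruling of $S$ restricts on $C'$ to a base-point-free pencil of degree $C'\cdot F=\ell$ (base-point free and non-constant because $C'$ lies in no fibre, else it would span only a $\pp^{d-1}\subsetneq\pp^{g-1}$). Pushing forward the rank-one sheaf $\aaa'$ underlying this pencil, $\aaa:=\pi'_*\aaa'$ is torsion free of rank one on $C$ with $h^0(\aaa)\ge2$ and, $\pi'$ being finite, $\deg\aaa=\chi(\aaa')-\chi(\oo_C)=\ell+(g-g')$; hence $\gon(C)\le\ell+g-g'$. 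The same reasoning gives~(III)(i): $\ell=1$ makes the ruling a degree-one map $C'\to\pp^1$, hence an isomorphism, so $C'\cong\pp^1$; then $C$ and $C'$ share the normalization $\pp^1$, so $C$ is rational, and since $\pi'$ is a local isomorphism over every Gorenstein point (blowing up the invertible ideal $\ww_P$ does nothing) a Gorenstein singular point of $C$ would make $C'$ singular, which is impossible — so all singular points of $C$ are non-Gorenstein.

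For the parts of (III)(iii),(iv) phrased via ``nearly Gorenstein''/``almost Gorenstein'' I would play the identity against the cohomology of $S$. By Remark~\ref{remrel}(i),(ii) these are equivalent to projective, resp. arithmetic, normality of $C'$, and since a rational normal scroll is arithmetically Cohen–Macaulay, projective normality of $C'$ reduces to $H^1(\oo_S(kH-C'))=0$ for all $k\ge1$. Writing $C'=D_1\cdots D_{d-1}$ with $D_i=a_iH+b_iF$, and using that a nonhyperelliptic $C$ with $\ell$ small must be non-Gorenstein (so $\eta,\mu\ge1$, hence $\eta+2\mu\ge3$), the master identity bounds $b=\sum b_i$ from above, and then (\ref{equhhz}) together with the $h^i$-vanishing recorded next to it in Section~\ref{secscr} gives the required $H^1=0$; this is how $C$ nearly Gorenstein drops out for $d=2$, $\ell=2$, and the nearly-normal variant is the same argument with arithmetic normality.

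Finally~(IV). The upper bound is short: each $D_i$ effective gives $b_i\ge-a_im_1$ by~(\ref{equhhz}), and substituting into the degree expansion~(\ref{equcdd}) yields $\deg(C')\ge\ell\bigl(\textstyle\sum_{i\ge2}m_i-(d-2)m_1\bigr)\ge\ell\,m_d$, i.e. $m_d\le(2g-2-\eta)/\ell$. The lower bound is the real work and the step I expect to fight with. Effectivity also gives $b\ge-m_1\sum a_i$, i.e. $m_1\ge-b/a$, and substituting $b$ from the master identity turns this into a bound whose only free quantity is $a=\sum a_i$; then $\sum a_i\ge(d-1)\bigl(\prod a_i\bigr)^{1/(d-1)}=(d-1)\sqrt[d-1]{\ell}$ is exactly what produces $\nu=(d-1)\sqrt[d-1]{\ell}-d-1$. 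Sharpening this to the stated constants — using (\ref{equhhz}) and its companion vanishing on the $D_i$ and their partial sums rather than bare effectivity — and peeling off the exceptional configuration $a=d+1$ (one $a_i$ equal to $2$, the rest to $1$, which forces $\eta+2\mu$ to be a multiple of $\ell$ and $\ge\ell$, leaving only $m_1\ge(g-d-1)/(d+1)$) is where the difficulty lies; the scroll estimates being imitated are those of \cite{Sc,BS}, transported to a non-invertible $\ww$ through the formulae in Section~\ref{secscr}.
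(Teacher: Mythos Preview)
Your core strategy—deriving the master identity from (\ref{equgsc}) and reading off the numerical claims—is exactly the paper's, and your treatment of (I), (III)(i), the formula in (II), the $\tau$-parametrization in (III)(iii)--(iv), and the lower bound in (IV) via $m_1\geq -b/a$ all match the paper's proof essentially line for line.

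There is, however, a genuine gap in your plan for the $d=2$-specific assertions: the bound $\ell\leq 3$ in (II), the claim that $C$ is nearly Gorenstein in (III)(iii), and the equivalence ``almost Gorenstein $\Leftrightarrow$ Kunz'' in (III)(iv). Your proposed route through projective/arithmetic normality and $H^1$-vanishing on $S$ is both harder than necessary and mismatched to the statements—``almost Gorenstein iff Kunz'' is not a normality assertion about $C'$ at all, it is the numerical identity $\eta=\mu$, and there is no ``nearly-normal variant'' in (III). The paper's argument is purely arithmetic and rests on one observation you have not used: when $d=2$ there is a \emph{single} divisor $D_1$, so $a=a_1=\ell$ and (\ref{equcdd}) collapses to $\deg(C')=\ell e+b$, whence $b=(2g-2-\eta)-\ell(g-2)$ is \emph{determined}. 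Feeding this extra relation back into the master identity immediately gives $\mu=1$ when $\ell=2$ (hence nearly Gorenstein), gives $\eta=\mu$ when $\ell=3$ (hence almost Gorenstein $\Leftrightarrow$ Kunz), and for general $\ell$ rewrites (\ref{equgsc}) as the quadratic $2g'=-(g-2)\ell^2+(2\deg(C')+g-4)\ell-2(\deg(C')-1)$, whose root structure forces $\ell\leq 3$ once $g\geq 5$. No cohomology of $S$ enters.

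One smaller divergence: for the upper bound $m_d\leq (2g-2-\eta)/\ell$ in (IV), your argument via (\ref{equcdd}) and the effectivity inequalities $b_i\geq -a_im_1$ is correct, but it leans on the complete-intersection hypothesis. The paper obtains the same bound without it, by invoking Schreyer's inclusion $\oo_{C'}(m_dF)\hookrightarrow\oo_{C'}(H)$ from \cite[pp.~113--115]{Sc} and comparing degrees, using $\deg(\oo_{C'}(H))=2g-2-\eta$ from \cite[Cor.~4.9]{KM}. This matters because the paper explicitly records that (I) and the upper bound in (IV) hold beyond the complete-intersection setting.
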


\begin{proof}
We start by pointing out that item (I) and  and the upper bound in (IV) do not actually depend on $C'$ being a complete intersection in $S$. So we begin our proof by these parts.  To see (I), first note that, since $S$ is nonsingular, $\ell$ agrees with the intersection number $C'\cdot F$ priorly defined. Moreover, the fibers of $S$ cut out a $g_{\ell}^1$ which we may write as $\dd(\oo_{C'}(D),V)$. We may assume $D$ is effective, and considering the following sequence
$$
0\longrightarrow \oo_C\longrightarrow \pi'_*(\oo_{C'}(D)) \longrightarrow \pi'_*(\oo_{C'}(D))/\oo_C\longrightarrow 0
$$
from which we get, after taking Euler characteristic, that
$$
\deg(\pi'_*(\oo_{C'}(D)))=h^0(\pi'_*(\oo_{C'}(D))/\oo_C)
$$
We may further assume that $D$ is supported outside $(\pi')^{-1}({\rm Sing}(C))$ to see that
$$
h^0(\pi'_*(\oo_{C'}(D))/\oo_C)=\deg(D)+h^0(\pi'_*(\oo_{C'})/\oo_C)=\ell+g-g'
$$
which is the degree of the pencil $\dd(\pi'_*(\oo_{C'}(D)),V)$ on $C$ and, in particular, beats its gonality. To check the upper bound in (IV), follow \cite[pp. 113-115]{Sc} to see that $\oo_{C'}(m_dD)\subset \oo_{C'}(H)$. Now  $\deg(\oo_{C'}(m_dD))=m_d\ell$; while, on the other hand, $\oo_{C'}(H)=\oo_{C'}\ww=\oo_{\widehat{C}}\ww$ since $C'\cong\widehat{C}$. But $\deg(\oo_{\widehat{C}}\ww)=2g-2-\eta$ as proved in \cite[Cor. 4.9]{KM} and the bound follows comparing degrees.

To prove the remaining items, we will take $X = C'$ in (\ref{equgsc}). If so, note that $p_a(C')=g-\eta-\mu$ due to (\ref{equget}), and $\deg(C')=\deg(\oo_{C'}(H))=2g-2-\eta$ as we just have seen. Besides, the ambient dimension is $N=g-1$ which implies that $e=g-d$ by (\ref{equnnn}). Thus (\ref{equgsc}) reduces to
\begin{equation}
\label{equpacan}
(a - d -1)(2g - 2 - \eta) + \eta + 2\mu - \ell(d+2-b-g) = 0
\end{equation}

If $b \neq d+2-g$, the above equality provides the formula of (II). Otherwise
\begin{equation}
\label{equgn1}
\eta + 2\mu = (d+1 -a)(2g - 2 - \eta)
\end{equation}
Since the left hand side of the equation is positive, we have that $a\leq d+1$. Now note that $C'$ is given by the intersection of effective divisors, so all $a_i\geq 0$ due to (\ref{equhhz}). Moreover, by its very construction, $C'$ is nondegenerate; therefore $\ell\geq 1$, which implies that all $a_i\geq 1$ and hence $a\geq d-1$. Being nondegenerate in $\mathbb{P}^{g-1}$, it follows that $C'$ has degree at least $g-1$. So if all $a_i=1$, then (\ref{equcdd}) yields
\begin{align*}
\deg(C')&=e+b=(g-d)-(g-d-2)=2
\end{align*}
which is precluded since $g\geq d+3\geq 5$. Therefore at least one $a_i\neq 1$ and hence $a\geq d$. Now $a=d$ if and only if there is exactly one $a_i\neq 1$, and it must be $2$, which holds if and only if $\ell=2$; besides (\ref{equgn1}) establishes the relation
$$
g-\eta-\mu=1
$$
which, by (\ref{equget}), is equivalent to $g'=1$, i.e., $C'$ is elliptic. If $a=d+1$, then either there are just two $a_i\neq 1$ both with value $2$, so $\ell=4$, or there is exactly one $a_i\neq 1$ and with value $3$, so $\ell=3$; besides the vanishing of the left hand side of (\ref{equgn1}) implies that $C$ is Gorenstein. In this case, $C\cong C'$ and, recalling that the fibers cut out a $g_{\ell}^1$ in $C'$, the gonality of $C$ is at most $\ell$, which, as just seen, should be $3$ or $4$ in such a case. So the first statement of (II) is proved up to sufficiency, which will be analyzed right away when we deal with the case $\ell=2$.

To carry out (II), it remains proving that $\ell\leq 3$ if $d=2$. In fact, $d=2$ is a special case where $a=\ell$ and $b=d'-\ell(g-2)$ with, say, $d':=\deg(C')$, owing to (\ref{equcdd}). So (\ref{equgsc}) turns into
\begin{equation}
\label{equll2}
2g'=p(\ell)=-(g-2)\ell^2+(2d'+g-4)\ell-2(d'-1)
\end{equation}
Since $y=p(x)$ is concave downwards with roots $1$ and $2(d'-1)/(g-2)$, and $g'$ is positive, it follows that
$$
\ell\leq \frac{2(d'-1)}{g-2}=\frac{2(2g-\eta-3)}{g-2}=4-\frac{2(\eta-1)}{g-2}
$$
So $\ell\leq 4$ since $g\geq 5$. If $C$ is Gorenstein then $g'=g$, so taking $\ell=4$ in (\ref{equll2}) yields $g=3$ which is precluded. And if $C$ is not Gorenstein and $\ell=4$ then $\eta=1$ and thus $g'=0$; but by Remark \ref{remrel}.(iii), $\mu=1$ as well so $g=2$ which cannot happen either. It follows that $\ell\leq 3$ in any case.

To prove (III), assume $\ell=1$. If so, the fibers of $S$ cut out a $g_1^1$ on $C'$, so $C' \cong \proje^1$. Since $C$ is nonhyperelliptic, $C$ and $C'$ are birationally equivalent, so $C$ is rational. Moreover, as $C'$ is nonsingular, then all singular points of $C$ must be non-Gorenstein, because $\oo_{C',P}\cong\oo_{C,P}$ if $P$ is Gorenstein and $C$ nonhyperelliptic. 

If $\ell=2$ then, as said above, $a=d$ and (\ref{equpacan}) yields
$$
\eta + \mu + b - d -1 = 0
$$
So one may write $b = d-g+g'+1$. If $g'=0$ then $C' \cong \proje^1$ with $b =d+1-g$; if $g'= 1$, that is, $C'$ is elliptic, then $b =d+2-g$ and the equivalence in (II) is now accomplished. And, finally, if $g'\geq 2$, it follows that $b>d+2-g$ as desired. 

If $\ell=d$ then (the formula in) (iii) follows from (\ref{equpacan}) setting $\tau:=a-d-1$ and observing that $a\geq d$, as seen above, and $a\leq \ell+d-2=2d-2$. In particular, if $d=2$ then $\tau=-1$, and, as said above, $b=d'-\ell(g-2)=2-\eta$, so replacing this in (iii) yields $\mu=1$, which is equivalent to saying that $C$ is nearly Gorenstein. And if $d=3$ then either $\tau=-1$, which implies that $a=d$ from which we get $\ell=2$ as priorly discussed; but this is precluded since $\ell=d=3$; hence $\tau=0$ which yields $b=-(g-4)+(\eta+2\mu)/3$; the result follows since $C$ is Kunz with just one non-Gorenstein point iff $\eta=\mu=1$.

If $\ell=d+1$ then, similarly, (iv) follows from (\ref{equpacan}) setting agian $\tau:=a-d-1$ and observing that $a\geq d+1$ and $a\leq \ell+d-2=2d-1$. In particular, if $d=2$ then $\tau=0$, and now $b=d'-\ell(g-2)=4-g-\eta$, so replacing this in (iv) yields $\eta=\mu$, which implies that $C$ is almost Gorenstein iff it is Kunz. And if $d=3$ then either $\tau=0$, which implies that $b=-g+5-(\eta+2\mu)/4$ which precludes the case $\eta=\mu=1$ since $b$ is an integer, or, else $\tau=1$ and $b=-(3g-2)/2$ with $g$ even for $C$ to be Kunz with just one non-Gorenstein point.

To finish the proof, recall that $m_1\geq -b/a$ by (\ref{equhhz}). Since we are assuming $\ell\geq 3$, we may take $a\geq d+1$ as discussed above. If equality holds then (\ref{equpacan}) yields
$$
m_1\geq \frac{g - d - 1}{d+1} + \frac{\eta + 2\mu}{\ell}-\frac{1}{d+1}
$$
and the lower bound follows disregarding the sum of the last two terms, which is positive since $C$ is non-Gorenstein. On the other hand, if $a> d+1$ then (\ref{equpacan}) yields
$$
m_1 \geq \frac{g - d - 2}{a}+\frac{(a - d - 1)d' + \eta + 2\mu}{a\ell}
$$
and the bound follows since $d'\geq g-1$ as seen above, $\eta+2\mu\geq 3$ because $C$ is non-Gorenstein, and $(d-1)\sqrt[d-1]{\ell} \leq a \leq \ell+d-2$ by definition.
\end{proof}

As pointed out in \cite[Thm 4.1]{LMM}, if $d=3$ a similar result can be obtained relaxing the hypothesys to $C'$ being just a local complete intersection. This was done via Hartshorne-Serre correspondence for varieties of codimension $2$ lying on a smooth ambient. So we may complete the tableaux in \cite[Sec. 5]{LMM} of all tetragonal rational monomial curves by exhibiting chart equations adjusting the methods of \cite{Br}. The parameter $\ell$ defined above can be computed independently for such a curve. In fact, one may write an affine chart of the scroll $S_{m_1\ldots m_{d}}$ where $C'$ lies as
$$
(1:x:\ldots :x^{m_{d}}:y_1:y_1x:\ldots:y_1x^{m_{d-1}}:\ldots :y_{d-1}:y_{d-1}x:\ldots: y_{d-1}x^{m_1})\cong\mathbb{A}^{d}
$$
This yields morphisms
$$
\varphi_i: C'\longrightarrow \mathbb{P}^1\cong (1:x:\ldots:x^{m_1})
$$
and hence $\ell$ agrees with the generic number of points in a fiber of $\varphi_i$ no matter is $i\in\{1,\ldots,d\}$. If $C$ is rational monomial, then one may write $x=t^r$ and $y_i=t^{s_i}$. So the number of points in the pre-image of $(1:a:\ldots:a^{m_i})$ is precisely the number of solutions of the equation $t^r=a$ because any $t$ determines a unique point of $C'$ since it is parametrized. Thus $\ell=r$ and, by construction, this $r$ agrees with the same common difference of the arithmetic progression of Lemma \ref{Danielle}. The following tableau improves then \cite[pp. 18-19]{LMM}, where, for any rational monomial tetragonal curve $C$ with just one singular point of genus at most $8$, is given the canonical model $C'$ and a scroll $S$ it lies on. Here we also give the equations of $C'$ in $S$, the parameter $\ell$, and the kind of singularities.

\begin{center}
\begin{tabular}{|c|c|c|c|c|}
\hline
\multicolumn{5}{|c|}{\textbf{genus 6}}\\
\hline
$C$ and $C'$ & & eqs for $C'$ &  $\ell$ & scr \\
\hline
 $(1:t^5:t^6:t^8:t^{13}:t^{14})$ & -- & $f=x^3-z$                    & $2$    & $S_{111}$ \\   
$(1:t^2:t^5:t^6:t^7:t^8)$	          &    & $h=y^6-3x^2y^4z$ &        &        \\
	          &    & \ \ \ \ \ \ \ \ \ \ \ \ \ $+3x^4y^2z^2-z^5$ &        &         \\ \hline  
\multicolumn{5}{|c|}{\textbf{genus 7}}\\
\hline
$(1:t^4:t^7:t^{12}:t^{13})$           &  --	 & $f=x^4-y$                  & $1$    & $S_{112}$ \\
$(1:t:t^4:t^5:t^7:t^8:t^9)$	 &        & $h=x^7-z$ &        &            \\
\hline 
     $(1:t^4:t^{10}:t^{11}:t^{12}:t^{13})$ & --  & $f=y^2-x$                   & $4$    & $S_{112}$ \\   
$(1:t^2:t^3:t^4:t^6:t^7:t^8)$	             &   & $h=z^4-2xyz^2+x^3$           &        &           \\ \hline
$(1:t^5:t^8:t^{11}:t^{12}:t^{13}:t^{14})$ & -- & $f=x^3-z$	   &	$2$		 & $S_{112}$ \\
$(1:t^2:t^3:t^5:t^6:t^7:t^8)$		           &   & $h=y^2-z$                 &        &           \\ 
\hline
$(1:t^5:t^7:t^8)$  & --  & $f=x^4-z$ &	$2$		 & $S_{112}$ \\
$(1:t^2:t^5:t^7:t^8:t^9:t^{10})$ &    & $h=y^8-4xy^6z+6x^2y^4z^2$  &        &           \\
	&                                  &\ \ \ \ \ \ \ \ \ \ \ \ \ \ \ \ \  $-4x^3y^2z^3+z^5$     &        &           \\ 
\hline
 	 $(1:t^6:t^7:t^8:t^{10})$                 &  -- & $f=x^3-y$	&	$2$		 & $S_{112}$ \\
$(1:t^2:t^6:t^7:t^8:t^9:t^{10})$ &                                  & $h=y^7-3x^2y^5z^2$       &        &           \\
 &                                  & \ \ \ \ \ \ \ \ \ \ $+3x^4y^2z^4+z^6$       &        &           \\
\hline
\multicolumn{5}{|c|}{\textbf{genus 8}}\\
\hline
 $(1:t^4:t^{10}:t^{13}:t^{14}:t^{15})$    & -- & $f=x^3-z$ 					    &	$2$		 & $S_{222}$ \\
$(1:t^2:t^4:t^5:t^6:t^7:t^8:t^9:t^{10})$					                                  &                                          & $h=y^6-3x^2y^4z$       &        &           \\
			&                                          &\ \ \ \ \ \ \ \ \ \ \ \ \ \ \ \ $+3x^4y^2z^2+z^5$ &        &   \\
\hline        
$(1:t^6:t^9:t^{11}:t^{13}:t^{14}:t^{15}:t^{16})$ & -- & $f=x^3-z$ 			 &	$2$		 & $S_{113}$ \\
$(1:t^2:t^3:t^5:t^6:t^7:t^8:t^9)$ &                           & $h=y^2-z$              &      &     \\ 
\hline
 $(1:t^4:t^9:t^{14}:t^{15})$     &   --          & $f=x^4-y$ 					    &	$1$		 & $S_{122}$ \\
  $(1:t:t^4:t^5:t^6:t^8:t^9:t^{10})$   &                                          & $h=y^2-z$              &        &      \\
\hline
$(1:t^4:t^9:t^{14}:t^{15})$              &  --      & $f=x^4-y$ 					    &	$4$		 & $S_{122}$ \\
$(1:t:t^4:t^5:t^6:t^8:t^9:t^{10})$					                                  &                                          & $h=x^6-z$     &        &           \\ 
\hline
$(1:t^5:t^7:t^{13}:t^{15}:t^{16})$       & -- & $f=x^4-z$ 					    &	$2$		 & $S_{113}$ \\
$(1:t^2:t^3:t^5:t^7:t^8:t^9:t^{10})$                &                                          & $h=y^2-x^3$            &        &           \\ 
\hline
$(1:t^5:t^7:t^9:t^{10})$                 & -- & $f=x^5-z$ 					  &	$2$		 & $S_{113}$ \\
$(1:t^2:t^5:t^7:t^9:t^{10}:t^{11}:t^{12})$     &                                          & $h=y^2-z$              &        &           \\ 
\hline
$(1:t^4:t^{10}:t^{11}:t^{16}:t^{17})$    & NG & $f=x^3-y^2$			    &	$4$		 & $S_{113}$ \\
$(1:t^4:t^6:t^7:t^8:t^{10}:t^{11}:t^{12})$         &          & $h=y^7-3xy^4z$\ \ \ \ \ \ \ \ \          &        &           \\
        &          & \ \ \ \ \ \ \ \ \ $+3x^2yz^4+z^6$         &        &           \\
\hline
$(1:t^4:t^9:t^{11}:t^{15}:t^{16})$       & K & $f=x^4-yz$  			    &	$4$		 & $S_{113}$ \\
$(1:t^4:t^7:t^8:t^9:t^{11}:t^{12}:t^{13})$       &        & $h=y^9-4x^3y^6z$\ \ \ \ \ \ \ \  \ \ \     &        &           \\ 
 	                                          &                                          &\ \ \ \ \ \ $+6x^6y^3z^2-4x^9z^3+z^7$&     &        \\  
\hline
 $(1:t^4:t^{11}:t^{13}:t^{14})$           &      --    & $f=x^3-y$  			      &	$1$		 & $S_{122}$ \\
$(1:t:t^3:t^4:t^5:t^7:t^8:t^9)$     &                                          & $h=x^7-z$      &        &           \\ 
\hline
$(1:t^4:t^{11}:t^{13}:t^{14})$           &     --     & $f=x^3-y$  			      &	$4$		 & $S_{122}$ \\
$(1:t:t^3:t^4:t^5:t^7:t^8:t^9)$      &                                          & $h=x^4-z$     &        &           \\ 
   \hline
$(1:t^5:t^8:t^{12}:t^{13}:t^{14})$       &  --   & $f=x^4-z$  			      &	$2$		 & $S_{122}$ \\
$(1:t^2:t^4:t^5:t^7:t^8:t^9:t^{10})$    &                                          & $h=y^8-4xy^6z+6x^2y^4z^2$&      &           \\ 
 	                                          &                                          &\ \ \ \ \ \ \ \ \ \ \  \ \  $-4x^3y^2z^3+z^5$     &        &           \\ 
\hline
\end{tabular}
\end{center}

\

Note that in the tableau of \cite[p. 11]{LMM} one finds no trigonal nearly Gorenstein (or Kunz) curve $C$ for which the canonical model $C'$ lies on a smooth surface scroll with $\ell=1$. And in the above tableau, one also finds any tetragonal nearly Gorenstein $C$ with $C'$ lying on a $3$-fold scroll with $\ell=1,2$. This motivates the following result.

\begin{teo} 
\label{thmth3}
Let $C$ be a nearly Gorenstein curve, whose canonical model $C'$ lies on a smooth $d$-dimensional rational normal scroll $S$. If the fibers of $S$ cut out a complete $g_{\ell}^1$ on $C'$, then $d\leq \ell\leq d+1$.
\end{teo}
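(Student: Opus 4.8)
The plan is to reduce the statement to a single numerical identity for the first cohomology of a twisted ideal sheaf on $C'$ and then squeeze it between $0$ and $1$. Write $\mathcal{O}_{C'}(1)=\mathcal{O}_{\widehat{C}}\omega$ for the very ample sheaf embedding $C'\subset\mathbb{P}^{g-1}$, let $F$ be a general fiber of $S$, and put $D:=C'\cap F$, the general member of the complete base-point-free pencil $\dd:=|\mathcal{O}_{C'}(D)|$ cut out by the fibers; thus $D$ is reduced, lies on the smooth locus of $C'$, spans $F\cong\mathbb{P}^{d-1}$, and $\ell=\deg D$. Since $C$ is nearly Gorenstein we have $\mu=1$, hence $g-g'=\eta+\mu=\eta+1$ by (\ref{equget}); and, exactly as in the proof of Theorem \ref{thmth2}, $\deg\mathcal{O}_{C'}(1)=\deg(\mathcal{O}_{\widehat{C}}\omega)=2g-2-\eta$, $h^0(\mathcal{O}_{C'}(1))=g$, while Riemann--Roch on $C'$ gives $h^1(\mathcal{O}_{C'}(1))=1-\mu=0$.

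First I would compute $h^1(\mathcal{O}_{C'}(1)(-D))$ twice. From the exact sequence $0\to\mathcal{O}_{C'}(1)(-D)\to\mathcal{O}_{C'}(1)\to\mathcal{O}_D(1)\to 0$ and $h^1(\mathcal{O}_{C'}(1))=0$ one gets $h^1(\mathcal{O}_{C'}(1)(-D))=\deg D-\operatorname{rk}\big(H^0(\mathcal{O}_{C'}(1))\to H^0(\mathcal{O}_D)\big)=\ell-(\dim\langle D\rangle+1)$, which equals $\ell-d$ because $\langle D\rangle=F=\mathbb{P}^{d-1}$. In particular $\ell\geq d$, since $\ell$ points cannot span $\mathbb{P}^{d-1}$ with $\ell<d$ (equivalently $h^1\geq 0$); this is the lower bound.

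For the upper bound I would use Serre duality on $C'$. As $\mathcal{O}_{C'}(1)(-D)$ is invertible, $h^1(\mathcal{O}_{C'}(1)(-D))=h^0\big(\omega_{C'}\otimes\mathcal{O}_{C'}(D)\otimes\mathcal{O}_{C'}(-1)\big)$. The key input, drawn from the canonical-model theory of \cite{KM}, is the natural inclusion $\omega_{C'}\hookrightarrow\mathcal{O}_{C'}(1)$ whose cokernel has finite length $\deg\mathcal{O}_{C'}(1)-\deg\omega_{C'}=(2g-2-\eta)-(2g'-2)=\eta+2\mu$; here one uses $\mu=1$, so this length is exactly $\eta+2\geq 3>0$. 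Tensoring with the line bundle $\mathcal{O}_{C'}(D)\otimes\mathcal{O}_{C'}(-1)$ yields a strict inclusion $\omega_{C'}\otimes\mathcal{O}_{C'}(D)\otimes\mathcal{O}_{C'}(-1)\subsetneq\mathcal{O}_{C'}(D)$ with cokernel of length $\eta+2>0$. Hence $h^0$ of the left-hand sheaf is at most $h^0(\mathcal{O}_{C'}(D))=2$, and the inequality is strict: equality would force every global section of the base-point-free line bundle $\mathcal{O}_{C'}(D)$ into the proper subsheaf, hence (a globally generated line bundle being generated by its sections) that subsheaf would equal $\mathcal{O}_{C'}(D)$, a contradiction. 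Therefore $\ell-d=h^1(\mathcal{O}_{C'}(1)(-D))\leq 1$, that is $\ell\leq d+1$, and with the previous step $d\leq\ell\leq d+1$.

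The main obstacle is justifying the inclusion $\omega_{C'}\hookrightarrow\mathcal{O}_{C'}(1)$ with cokernel of length $\eta+2\mu$: since $C'$ need not be Gorenstein, $\omega_{C'}$ is only a rank-one torsion-free sheaf, so ``$\mathcal{O}_{C'}(1)\otimes\omega_{C'}^{-1}$'' must be read through the inclusion rather than as a line bundle. The cleanest route is the chain $\omega_{C'}=\pi'^{!}\omega_C=\mathcal{H}om_{\mathcal{O}_C}(\pi'_*\mathcal{O}_{C'},\omega_C)\subseteq\omega_C\subseteq\mathcal{O}_{C'}\omega_C=\mathcal{O}_{\widehat{C}}\omega=\mathcal{O}_{C'}(1)$, combined with the degree bookkeeping of \cite[\S 4]{KM}, which pins the colength at $\eta+2\mu$. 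A secondary point deserving a sentence is the reduction to $\langle D\rangle=F$ used in the lower bound: this holds for the scroll naturally attached to the pencil (for instance the one produced in Theorem \ref{thmscr}), and one should note that this is the scroll on which $C'$ is placed; otherwise $C'$ already lies on the proper sub-scroll swept out by the spans $\langle D\rangle$, and one argues with that one instead.
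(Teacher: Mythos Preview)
Your argument follows the same skeleton as the paper's: both establish the identity $\ell=d+h^1(\mathcal{O}_{C'}(H-D))$ and then show this $h^1$ is at most $1$. The paper obtains the identity by citing Schreyer for $h^0(\mathcal{O}_{C'}(H-D))=\deg S$ (which is exactly your claim $\langle D\rangle=F$, rephrased) and then applies Riemann--Roch on $C'$; you get it directly from the restriction sequence, using $h^1(\mathcal{O}_{C'}(1))=0$. For the upper bound, the paper pushes everything down to $C$ and applies Serre duality there, identifying $H^1(\mathcal{O}_{C'}(H-D))$ with $H^0(\mathcal{H}{\rm om}(\widehat{\oo}\omega,\omega)\otimes\pi'_*\mathcal{O}_{C'}(D))$, then exhibits the constant section $1$ as a global section of $\pi'_*\mathcal{O}_{C'}(D)$ that fails to lie in the smaller sheaf at the unique non-Gorenstein point. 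Your route via Serre duality on $C'$ and the inclusion $\omega_{C'}\hookrightarrow\mathcal{O}_{C'}(1)$ is a genuine variant: it packages the same local obstruction (the colength $\eta+2\mu>0$) into a global-generation argument, which is cleaner and avoids the explicit local computation. Unwinding, the two subsheaves of $\mathcal{O}_{C'}(D)$ (resp.\ of $\pi'_*\mathcal{O}_{C'}(D)$) coincide, so the arguments are equivalent at bottom.

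Your flagged ``secondary point'' about $\langle D\rangle=F$ is not merely cosmetic: it is exactly what the paper invokes Schreyer for, and without it the identity $\ell-d=h^1$ can fail. Your proposed fix (pass to the sub-scroll swept by the spans) changes $d$, so it does not recover the statement for the given $S$; rather, it shows the theorem holds for the Schreyer scroll attached to the pencil. The paper's proof has the same dependence, so your treatment is on par with it, but you should be aware that this step carries real content rather than being a formality.
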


\begin{proof}
Since $S$ is smooth, the $g_{\ell}^1$ is base point free and can be written as $\dd(\oo_{C'}(D),V)$. Now, according to \cite[Thm. 6.5]{KM}, $C$ is nearly Gorenstein if and only if $C'$ is linearly normal. And if $C'$ is linearly normal and lies on $S$, then,  by \cite[Sec. 2]{Sc}, we have that $h^0(\oo_{C'}(H-D))=\deg(S)$. Using \cite[Cor. 4.9]{KM} along with Riemann-Roch in $C'$ for the left hand side of the latter equality, while using (\ref{equnnn}) and (\ref{equdgs}) for the right hand side we get
\begin{equation}
\label{equff1}
2g-2-\eta-\ell+1-g'+h^1(\oo_{C'}(H-D))=g-d
\end{equation}
Recalling that $g'=g-\eta-\mu$ and that $C$ is nearly Gorenstein if and only if $\mu=1$, we have that (\ref{equff1}) turns into
\begin{equation}
\label{equff2}
\ell=d+h^1(\oo_{C'}(H-D))
\end{equation}
Set $\aaa:=\pi'_{*}(\oo_{C'}(D))$. Pushing forward and using Serre duality on $C$ we get
\begin{align*}
H^1(\oo_{C'}(H-D))&=H^1(\pi'_{*}(\oo_{C'}(H-D)))\\
                                &=H^1(\ooh\ww\otimes\aaa^{\vee})\\
                                &=H^0(\mathcal{H}{\rm om}(\ooh\ww,\ww)\otimes\aaa)
\end{align*}
Set $\G:=\mathcal{H}{\rm om}(\ooh\ww,\ww)\otimes\aaa$. Clearly, $H^0(\G)\subset H^0(\aaa)=H^0(\oo_{C'}(D))$. Now, since $C$ is nearly Gorenstein, it has a unique non-Gorenstein point, say $P$. We may assume that $D$ is supported outside $(\pi')^{-1}(P)$. So, by construction, $\G_Q=\aaa_Q$ if $Q\neq P$, while $\G_P={\rm Hom}_{\op}(\ooh_P,\op)$. We may further assume that $D$ is effective, so $k\subset H^0(\aaa)$; but note that $k\cap\G_P=0$, so $H^0(\G)\subsetneq H^0(\aaa)$. Therefore 
$$
h^1(\oo_{C'}(H-D))=h^0(\G)<h^0(\aaa)=h^0(\oo_{C'}(D))=2
$$
where the last equality holds since the $g_{\ell}^1$ is complete. The result follows by (\ref{equff2}).
\end{proof}

As observed in \cite{LMM}, the efforts we make here to charcterize gonality via scrolls, though general, are still incipient. A detailed study of syzygies in the same way of, e.g., \cite{Sc} or \cite{CS} (and references therein), will likely be required; or even adjusting the techniques of the recent \cite{HU} (and references therein) to canonical models, and also scrolls as the ambient space.

\section{The Gonality of Rational Monomial Curves}

Now we set the objects we'll be dealing with in this section. Consider the morphism

\begin{gather*}
\begin{matrix}
\mathbb{P}^1 & \longrightarrow & \mathbb{P}^n \\
   (s:t)              & \longmapsto     & (s^{a_n}:t^{a_1}s^{a_n-a_1}:\ldots:t^{a_{n-1}}s^{a_n-a_{n-1}}:t^{a_n})
\end{matrix}
\end{gather*}
The image $C$ of such a map is what we call a \emph{rational monomial curve}, which, for simplicity, we denote by
$$
C=(1:t^{a_1}:\ldots:t^{a_{n-1}}:t^{a_n})
$$
with $a_1<\ldots < a_n$. Note that $C$ admits at most two singular points which are
$$
P=(1:0:\ldots:0)
$$
the origin of the affine space $\mathbb{A}^n\subset \mathbb{P}^n$, and
$$
Q=(0:0:\ldots : 1)
$$
the point at the infinity under the parametrization given by $t$. The following result generalizes \cite[Prp. 3.1]{LMM} and by means of rather different arguments.

\begin{teo}
\label{prprat}
Let $C$ be a rational monomial curve. Then its canonical model is
$$
C'=(1:t^{b_2}:\ldots :t^{b_{\delta_P}}:t^{c_1}:\ldots:t^{c_{\delta_Q}})
$$
where $\{0,b_2,\ldots,b_{\delta_P}\}=\gamma_P-G_P$ and $\{c_1,\ldots,c_{\delta_Q}\}=\gamma_P+G_Q$.
\end{teo}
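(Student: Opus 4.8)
The plan is to compute the canonical model explicitly from its intrinsic description $C' = \kappa(\overline{C})$, where $\kappa$ is induced by the base-point-free system $\sys(\oo_{\overline C}\ww, H^0(\ww))$ of Rosenlicht (Section \ref{seccan}). The first step is to identify $H^0(\ww)$ concretely for a rational monomial curve. Writing $\overline{C}=\pp^1$ with coordinate $t$, and recalling that $C$ has at most two unibranch singularities $P$ (at $t=0$) and $Q$ (at $t=\infty$), the dualizing sheaf $\ww$ is the sheaf whose sections over an open set are the rational differentials $\omega$ with at worst prescribed poles dictated by the complementary modules (Rosenlicht differentials): $\omega=f(t)\,dt$ lies in $\ww_P$ iff the residue pairing against $\op$ vanishes, which translates into a condition on $v_P(f)$ in terms of $\kk_P$ (see \eqref{equkkp}), and similarly at $Q$. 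Concretely, $H^0(\ww)$ has a basis of monomial differentials $t^{j}\,dt$ where $j$ ranges over a set controlled by the Frobenius vector $\gamma_P$ and the gaps $G_P$, $G_Q$. This is the step where the arithmetic of the semigroups enters, and I expect it to be the technical heart: one must show that $\dim H^0(\ww)=g=\delta_P+\delta_Q$ and pin down the exact exponent set, using that for a rational monomial curve $g = \delta_P+\delta_Q$ and the standard description of $\kk_P$ as the "holes" of $\sss_P$ shifted by $\gamma_P$.

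The second step is to pass from $H^0(\ww)$ to the map $\kappa$. Since we only need the image up to projective equivalence, we may divide all basis differentials by a fixed one — the natural choice being $\lambda$ from \eqref{equvvv}, or more simply the differential corresponding to the smallest exponent — so that $\kappa$ becomes a map $t\mapsto (1: t^{e_2}:\cdots:t^{e_g})$ for an explicit increasing exponent set $\{0=e_1<e_2<\cdots<e_g\}$. The claim is that this exponent set is exactly $(\gamma_P - G_P)\cup(\gamma_P+G_Q)$, the first block of size $\delta_P$ coming from the differentials "concentrated" at $P$ and the second block of size $\delta_Q$ from those at $Q$; the shift by $\gamma_P=\beta_P-1$ in both blocks reflects the common normalization by $\lambda$, whose order at $P$ is $\gamma_P$ by the choice making $(\overline{\oo}\ww)_P=\overline{\oo}_P\lambda$. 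One then checks that removing repetitions and reindexing yields precisely the stated form $C'=(1:t^{b_2}:\cdots:t^{b_{\delta_P}}:t^{c_1}:\cdots:t^{c_{\delta_Q}})$ with $\{0,b_2,\ldots,b_{\delta_P}\}=\gamma_P-G_P$ and $\{c_1,\ldots,c_{\delta_Q}\}=\gamma_P+G_Q$.

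The third step is to verify that the resulting monomial map is an embedding onto $C'$, i.e.\ that it separates points and tangent vectors of $\overline{C}$ away from the preimages of the singular locus, and that the image is exactly $C'$ and not some further projection — this follows from base-point-freeness (Rosenlicht) together with the fact that the two exponent blocks are disjoint enough that no coordinate is superfluous, which can be read off from the minimality of the generating sets of $\sss_P$ and $\sss_Q$. Finally, one should double-check consistency with the earlier examples (e.g.\ $\sss_P=\langle 3,10,14\rangle$ giving $C'=(1:t^3:t^4:t^6:t^7:t^9:t^{10})$ in Example \ref{exedif}) and with \cite[Prp. 3.1]{LMM} in the one-singularity case $\delta_Q=0$, where the formula degenerates to $C'=(1:t^{b_2}:\cdots:t^{b_{\delta_P}})$ with exponents $\gamma_P-G_P$. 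The main obstacle, as flagged, is the first step: getting the monomial basis of $H^0(\ww)$ right for a curve with \emph{two} singular points simultaneously, since the Rosenlicht differential conditions at $P$ and at $Q$ interact through a single global rational differential on $\pp^1$, and one must be careful that the pole/zero bookkeeping at the two points is mutually consistent and accounts for the $-2$ in $\deg\ww = 2g-2$ correctly.
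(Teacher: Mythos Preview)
Your plan is sound and will work, but it takes a genuinely different route from the paper. You propose to compute $H^0(\ww)$ directly via Rosenlicht's residue description: for $\omega=t^{j}\,dt$, the conditions $-j-1\notin\sss_P$ and $j+1\notin\sss_Q$ give exactly the exponent set $(-1-G_P)\sqcup(G_Q-1)$, and after dividing by $t^{-\beta_P}\,dt$ one lands on $(\gamma_P-G_P)\cup(\gamma_P+G_Q)$. The paper instead argues \emph{verificatively}: it simply writes down the candidate subsheaf $\vv:=\oo_C\langle 1,t^{b_i},t^{c_j}\rangle\subset\mathcal{K}$, observes $h^0(\vv)\geq g$ by linear independence, and then spends all its effort on the numerical check $\deg(\vv)=2g-2$ by computing $\dim(\vv_P/\op)$ and $\dim(\vv_Q/\oq)$ via valuation sets and the relative ideal $\kk_P$. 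From $h^0\geq g$ and degree $2g-2$ it concludes $\vv\cong\ww$ by the characterization in \cite[p.~110]{S}, and the canonical map drops out. Your approach is more direct and explains \emph{where} the exponent set comes from; the paper's approach avoids differentials entirely and reduces everything to a length computation, which is arguably more robust but less illuminating.

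One correction to your plan: your third step is unnecessary and slightly confused. The canonical model is \emph{defined} as the image $\kappa(\overline{C})$, so there is nothing to verify about embeddings, separation of points, or superfluous coordinates --- once you know a basis of $H^0(\ww)$, the image of the associated map to $\pp^{g-1}$ \emph{is} $C'$ by definition. The linear independence of the monomials $t^{e_i}$ already guarantees no coordinate is redundant, and whether or not $\kappa$ is an isomorphism onto $C'$ (it is, by Rosenlicht, when $C$ is nonhyperelliptic) is irrelevant to the statement. You can drop that step entirely.
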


\begin{proof}
Let $g$ be the genus of $C$ and let $\vv$ be a torsion free subsheaf of rank $1$ of the constant sheaf of rational functions $\mathcal{K}$ on $C$. If $h^0(\vv)\geq g$ and $\deg(\vv)=2g-2$ then there exists an embedding of the dualizing sheaf  $\ww\hookrightarrow\mathcal{K}$ whose image is $\vv$. In fact, this can be seen, for instance, rephrasing \cite[p. 110 top]{S} in terms of sheaves. So consider the subsheaf of $\mathcal{K}$ defined by
$$
\vv:=\oo_C\langle 1,t^{b_1},\ldots,t^{b_{\delta_P-1}},t^{c_1},\ldots,t^{c_{\delta_Q}}\rangle
$$
that is, it is generated by the (global) sections $1,t^{b_i},t^{c_j}\in k(t)=k(C)$ defined by the statement of the theorem. Since the $b_i$ and $c_j$ all differ to one another, these sections are linear independent, so $h^0(\vv)\geq \delta_P+\delta_Q=g$. We claim that $\deg(\vv)=2g-2$ as well.

In order to prove this, first note that $1\in H^0(\vv)$ thus $\vv\supset\oo$ and hence
$$
\deg(\vv)=\sum_{R\in C}\dim(\vv_R/\oo_R).
$$
Now if $R\neq P,Q$, then, clearly, $\vv_R=\oo_R$ and any such an $R$ gives no contribution to the degree. Let us compute the dimension for the point $Q$. Recall, for instance, from \cite[Prp. 1]{M} that
\begin{equation}
\label{equdwq}
\dim(\vv_Q/\oo_Q)=\#(v_Q(\vv_Q)\setminus\sss_Q)
\end{equation}
We assert that
\begin{equation}
\label{equvwq}
v_Q(\vv_Q)\setminus\sss_Q=-(\gamma_P+G_Q)\,\bigcup\,\{-\gamma_P+1,-\gamma_P+2,\ldots,-1\}\,\bigcup\, G_Q
\end{equation}
Indeed, to prove ``$\supset$" note that $\gamma_P+G_Q=\{c_i\}_{i=1}^{\gamma_P}$, and that $t^{c_i}\in\vv_Q$ by construction; on the other hand, $v_Q(t^{c_i})=-c_i$ so the inclusion of the first set in the above union holds. Now let $r\in\zz$ be such that $r\leq\gamma_P-1$; then, in particular, $c_i-r\geq 2$ for any $i$; since the number of $c_i$'s is the number of gaps of $\sss_Q$ and $1$ is a gap, we have that $c_j-r\in\sss_Q$ for some $j$, so take $f\in\oo_Q$ for which $v_Q(f)=c_j-r$; thus $t^{c_j}f\in\vv_Q$ and $v_Q(t^{c_j}f)=-c_j+c_j-r=-r$ so the inclusions of the second and third sets above hold as well.  

To prove ``$\subset$" we use the fact that $C$ is monomial and, in particular, so is the local ring of $Q$. That is, if one writes
\begin{equation}
\label{equun3}
\sss_Q=\{0,s_1,\ldots,s_n,\beta_Q,\rightarrow\}
\end{equation}
then we have that
\begin{equation}
\label{equooq}
\oo_Q=k\oplus kt^{-s_1}\oplus\ldots\oplus kt^{-s_n}\oplus\mathcal{C}_Q
\end{equation}
since $t^{-1}$ is a local parameter for  $\overline{Q}$. From this we conclude that any $a\in v_Q(\vv_Q)$ is of the form $a=v_Q(t^dt^{-s})=s-d$ where $d\in\{ 0,b_2,\ldots,b_{\delta_P},c_1,\ldots,c_{\delta_Q}\}$ and $s\in\sss_Q$. So it suffices to show that if $s-d\leq -\gamma_P$ then $d-s-\gamma_P\not\in S_Q$. Now the inequality implies that $d\in\{c_1,\ldots,c_{\delta_Q}\}=\gamma_P+G_Q$. Hence there is an $\ell\in G_P$ for which $d=\gamma_P+\ell$ yielding that $d-s-\gamma_P=\ell-s$ which cannot be in $\sss_Q$ otherwise so would be $\ell$. Thus ``$\subset$" is proved too.

From (\ref{equdwq}) and (\ref{equvwq}) we conclude that
\begin{equation}
\label{equdmq}
\dim(\vv_Q/\oo_Q)=2\delta_Q+\beta_P-2
\end{equation}

Now let us compute the dimension for the point $P$. We claim that
\begin{equation}
\label{equwpl}
\vv_P=k\oplus kt^{b_2}\oplus\ldots\oplus kt^{b_{\delta_P}}\oplus\cp
\end{equation}
Indeed, ``$\supset$" trivially comes from the fact that 
\begin{equation}
\label{equwwp}
\vv_P=\oo_P+t^{b_2}\oo_P+\ldots+t^{b_{\delta_P}}\oo_P
\end{equation}
To prove  ``$\subset$" note first that 
$$
v_P(k\oplus kt^{b_2}\oplus\ldots\oplus kt^{b_{\delta_P}}\oplus\cp)=\kk_P
$$
where the latter was defined in (\ref{equkkp}). Since ``$\supset$" holds, it is enough showing that 
\begin{equation}
\label{equvwp}
v_P(\vv_P)=\kk_P
\end{equation}
as well. To see so, recall again that $C$ is monomial thus $\oo_P$ also satisfies (\ref{equooq}) replacing $Q$ by $P$ and $t^{-1}$ by $t$. Combining this with (\ref{equwwp}) it suffices to prove that $\sss_P+\kk_P\subset \kk_P$ to get (\ref{equvwp}); or, in the language of semigroups, that $\kk_P$ is an $\sss_P$ relative ideal; but this is known, for instance, by \cite{J}, so ``$\subset$" holds and the claim follows.

Thus, from (\ref{equwpl}), we have that $\dim(\vv_P/\cp)=\delta_P$.  Therefore one may write
\begin{align*}
\dim(\vv_P/\op)&=\dim(\obp/\oo_P)+\dim(\vv_P/\cp)-\dim(\obp/\cp) \\
          &=2\delta_P-\beta_P
\end{align*}
Combining this along with (\ref{equdmq}) yields
\begin{align*}
\deg(\vv)&=\dim(\vv_P/\oo_P)+\dim(\vv_Q/\oo_Q)\\
          &=2\delta_P-\beta_P+2\delta_Q+\beta_P-2\\
          &=2g-2
\end{align*}
So $\vv$ is an isomorphic image of $\ww$ in $\mathcal{K}$ as desired. Call 
$$
V:=k\oplus kt^{b_2}\oplus\ldots\oplus kt^{b_{\delta_P}}\oplus kt^{c_1}\oplus\ldots\oplus kt^{c_{\delta_Q}}
$$
Since $\vv\cong\ww$, in particular, $h^0(\vv)=g$ and hence $H^0(\vv)=V$. Now $\cb=\pum$ so the canonical model $C'$ is the image of the morphism $\kappa :\pum\to\mathbb{P}^{g-1}$ defined by the linear system $\mathcal{L}(\oo_{\cb}\vv,H^0(\vv))=\mathcal{L}(\oo_{\pum}\langle V\rangle,V)$, that is, $C'=\kappa(\pum)$ where
\begin{gather*}
\begin{matrix}
\kappa : & \mathbb{P}^1 & \longrightarrow & \mathbb{P}^{g-1} \\
               & (1:t)              & \longmapsto     & (1:t^{b_2}:\ldots: t^{b_{\delta_P}}:t^{c_1}:\ldots :t^{c_{\delta_Q}})
\end{matrix}
\end{gather*}
and we are done.
\end{proof}

Now we recall a result for general scrolls which will be helpful for our purposes. We just warn the reader that the statement -- as any involving monomiality -- depends on a choice of coordinates of the ambient space, though one is allowed at least to reodering them. 

\begin{lema}
\label{Danielle}
A rational monomial curve $(1:t^{a_1}:\ldots :t^{a_{N}})\subset \pp^{N}$ lies on a $d$-fold scroll $S_{m_1m_2\ldots m_d}$ if and only if there is a partition of the set $\{0=a_0,a_1,\ldots,a_{N}\}$ into $d$ subsets, with, respectively, $m_1+1, m_2+1,\ldots,m_d+1$ elements, such that the elements of all of these subsets can be reordered within an arithmetic progression with the same common difference.
\end{lema}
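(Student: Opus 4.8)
The plan is to characterize the inclusion $C\subset S_{m_1\ldots m_d}$ directly from the defining matrix (\ref{equscr}), using the fact that the entries are the homogeneous coordinates of the ambient space and that $C$ is parametrized by monomials. First I would recall that $S_{m_1\ldots m_d}$ is, after a suitable permutation of coordinates, the vanishing locus of the $2\times2$ minors of a matrix whose columns are grouped into $d$ blocks, the $i$th block being
$$
\begin{pmatrix} x_{k_i} & x_{k_i+1} & \cdots & x_{k_i+m_i-1}\\ x_{k_i+1} & x_{k_i+2} & \cdots & x_{k_i+m_i}\end{pmatrix},
$$
so each block is the ``catalecticant'' associated to a rational normal curve of degree $m_i$, and the whole matrix has exactly $N+1$ distinct entries $x_0,\ldots,x_N$ appearing once in the top row or once in the bottom row of a block (with overlaps only within a block). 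A point $(1:t^{a_1}:\cdots:t^{a_N})$ lies on $S$ exactly when this $2\times f$ matrix, evaluated at the point, has rank $\le1$, i.e. all its $2\times2$ minors vanish for every $t$.

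Next I would translate the rank condition into an arithmetic-progression condition. Saying the matrix has rank $\le1$ at the parametrized point means that the second row is a scalar multiple of the first, say by a factor $\mu(t)$ depending only on the point (equivalently, all columns are proportional). Within a single block, consecutive columns are obtained by shifting indices, so proportionality of the columns $(x_{j},x_{j+1})^{t}$ forces $x_{j+1}/x_{j}$ to be independent of $j$ inside that block; evaluating on $C$ and writing $x_j = t^{a_{\sigma(j)}}$ (after reindexing coordinates by the permutation $\sigma$), this says $a_{\sigma(j+1)}-a_{\sigma(j)}$ is a constant, i.e. the exponents in the top row of block $i$ form an arithmetic progression, and the bottom row is the same progression shifted by that common difference; together they are a single arithmetic progression of length $m_i+1$. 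The cross-block condition (comparing a column of block $i$ with a column of block $j$) only imposes that the ratio $\mu(t)$ is the same for all blocks, which is automatic once each block individually has proportional columns because $\mu(t)$ is forced to be that common ratio $t^{r}$ where $r$ is the common difference — wait, here is the subtle point: the common difference $r$ must be \emph{the same} for every block, since $\mu(t)=x_{j+1}/x_{j}=t^{r_i}$ read off from block $i$ must coincide across blocks. This is precisely the requirement in the statement that all $d$ sub-progressions share one common difference. Conversely, if $\{0,a_1,\ldots,a_N\}$ partitions into $d$ subsets each of which, suitably ordered, is an arithmetic progression with common difference $r$, I would reorder the coordinates so that the $i$th block's top row lists the first $m_i$ exponents of the $i$th progression in increasing order and the bottom row lists the last $m_i$; then every $2\times2$ minor is a monomial identity $t^{a+b}-t^{b+a}=0$, so $C\subset S_{m_1\ldots m_d}$.

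The two directions then assemble into the iff. For ($\Rightarrow$) I extract from "$C$ lies on $S_{m_1\ldots m_d}$" the catalecticant matrix (using that any scroll of type $(m_1,\ldots,m_d)$ with some $m_i$ possibly zero still has this shape), read off the proportionality of columns, and deduce the partition and the common difference as above; a small point to handle is degenerate blocks with $m_i=0$ (a single column, giving a singleton subset, trivially an arithmetic progression) and the bookkeeping that the total number of entries is $\sum(m_i+1)=N+1$, matching $e+d-1+1$ where $e=\sum m_i$. For ($\Leftarrow$) I do the explicit reordering and minor computation. I expect the main obstacle to be the careful handling of the permutation of coordinates and the cross-block comparisons: one must argue cleanly that the only constraint linking different blocks is equality of the common difference, and that overlapping entries within a block (each $x$ appears in two consecutive columns) are consistent with a single progression — i.e. translating "rank $\le 1$ of the full $2\times f$ matrix" faithfully into "$d$ progressions with a shared step" without losing or adding conditions. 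Once that dictionary is set up precisely, both implications are essentially monomial bookkeeping.
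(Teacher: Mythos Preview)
The paper does not supply its own proof of this lemma; it simply cites \cite[Lem.~3.2]{LMM}. Your argument is the natural direct one and is essentially correct: evaluate the catalecticant matrix (\ref{equscr}) on the parametrized curve, observe that each $2\times 2$ minor becomes a binomial $t^{\alpha}-t^{\beta}$, and translate ``all minors vanish identically in $t$'' into the arithmetic-progression conditions on the exponents. Since the paper's preceding remark makes clear that the statement is coordinate-dependent (only reorderings are allowed), your restriction to permutations is appropriate, and the within-block and cross-block analyses you sketch give exactly the needed dictionary. This is almost certainly the same argument as in the cited reference, just spelled out.

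Two small points worth tightening. First, when you pass from ``rank $\le 1$'' to ``second row is a scalar multiple of the first'', you should note this holds for generic $t$ (where no coordinate vanishes); since the minor identities you extract are polynomial in $t$, vanishing on a Zariski-dense set suffices. Second, for blocks with $m_i=0$ there are no columns at all, hence no minors involving that block's lone coordinate; the corresponding singleton subset is trivially a one-term arithmetic progression with \emph{any} common difference, so no constraint is lost. You already flagged both issues; just make the resolution explicit rather than leaving them as asides.
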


\begin{proof}
See \cite[Lem 3.2]{LMM}
\end{proof}

With this in mind, we are ready to prove the following result.

\begin{teo} 
\label{thmrat}
Let $C$ be a rational monomial curve of genus at least $1$. Then $\gon(C)=d$ if and only if its canonical model $C^{\prime}$ lies on a $(d-1)$-fold scroll, and does not lie on any scroll of smaller dimension. 
\end{teo}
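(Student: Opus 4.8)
The plan is to reduce the stated equivalence to the single numerical identity $\gon(C)=e_{\min}+1$, where $e_{\min}$ denotes the least dimension of a rational normal scroll on which $C'$ lies, and then to prove that identity. One inequality is free of charge: by Corollary \ref{corida}, if $d:=\gon(C)$ then $C'$ lies on a $(d-1)$-fold scroll, so $e_{\min}\le d-1$, i.e. $\gon(C)\ge e_{\min}+1$. Hence the whole point is the reverse inequality: from a scroll of dimension $e_{\min}$ carrying $C'$, one must produce a pencil of degree at most $e_{\min}+1$ on $C$. (The hypothesis $g\ge 1$ just rules out $C\cong\pp^1$, and the case $\gon(C)=2$, where $C'$ is the rational normal curve regarded as a $1$-fold scroll, is already \cite[Thm. 3.4]{KM}.)

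To make this concrete I would use the two combinatorial descriptions at hand. Fix $t$ with $k(C)=k(t)$, so $\cb=\pp^1$ and the possibly singular points are $P$ (at $t=0$) and $Q$ (at $t=\infty$), with value semigroups $\sss_P,\sss_Q$ (at least one of them singular since $g\ge 1$). By Theorem \ref{prprat}, $C'$ is the rational monomial curve whose exponent set — including the $0$ of its leading coordinate — is the explicit set $E=(\gamma_P-G_P)\cup(\gamma_P+G_Q)\subset\nn$, of cardinality $g$ and with $\gamma_P\notin E$, this gap separating the part of $E$ coming from $P$ from the part coming from $Q$. By Lemma \ref{Danielle}, $C'$ lies on an $m$-fold scroll exactly when $E$ splits into $m$ arithmetic progressions of one common difference $r$; for fixed $r$ the least such $m$ is $\rho(r):=\#\{x\in E\mid x+r\notin E\}$ (the number of right endpoints among the length-$r$ chains of $E$), so $e_{\min}=\min_{r\ge 1}\rho(r)$. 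Fix an $r$ attaining this minimum together with the attached partition $E=A_1\sqcup\cdots\sqcup A_{e_{\min}}$.

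The heart of the proof would then be to build from this partition a torsion free rank one sheaf $\aaa$ on $C$ with $h^0(\aaa)\ge 2$ and $\deg\aaa\le e_{\min}+1$. I would take $\aaa$ to equal $\oo_C$ away from $P$ and $Q$, with stalks $\aaa_P$ and $\aaa_Q$ that are monomial fractional ideals read off from the endpoints of the progressions $A_j$ lying below, respectively above, $\gamma_P$, together with one non-constant global section — a Laurent polynomial $h$ whose lowest and highest exponents are among the chosen ones — so that $\langle 1,h\rangle$ is a pencil. One then computes
$$
\deg\aaa=\dim(\aaa_P/\op)+\dim(\aaa_Q/\oq)=\#\big(v_P(\aaa_P)\setminus\sss_P\big)+\#\big(v_Q(\aaa_Q)\setminus\sss_Q\big)
$$
exactly as in the degree computations of the proof of Theorem \ref{prprat}, and, using the expression of $E$ in terms of $\sss_P$ and $\sss_Q$, checks that the two local counts add up to $1+\#\{A_1,\ldots,A_{e_{\min}}\}=e_{\min}+1$. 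With Corollary \ref{corida} this forces $\gon(C)=e_{\min}+1$; the degenerate cases ($g=1$, or $P$ or $Q$ smooth so $G_P$ or $G_Q$ is empty, or a length-one progression making the scroll a cone) are covered by the conventions already in force for scrolls.

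I expect the main obstacle to be precisely this degree count. The obvious candidate, the monomial pencil $\langle 1,t^{r}\rangle$, does not suffice in general: one finds
$$
\deg\big(\oo_C\langle 1,t^{r}\rangle\big)=\rho(r)+1+\#\{\,1\le j\le r-1\mid j\in\sss_Q\ \text{and}\ r-j\in\sss_P\,\},
$$
which overshoots $e_{\min}+1$ precisely when the chosen $r$ is a sum of a positive element of $\sss_P$ and a positive element of $\sss_Q$. The real work is therefore to show that $\rho$ is minimized at some $r$ for which this cannot happen (e.g. $r$ below the sum of the two smallest positive values of the semigroups), or, failing that, to exploit the extra room in the construction — equivalently, to let $\aaa$ carry \emph{non-removable} base points at $P$ and $Q$ instead of being $\oo_C\langle 1,h\rangle$ — so as to decouple the contribution at $P$ from the one at $Q$ and make the semigroup identity hold on the nose. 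A secondary complication is that the scroll furnished by Lemma \ref{Danielle} may be singular, so the smooth-scroll bounds of Section 3 (which control gonality through $\ell$ and the genus drop, not through the scroll's dimension) are of no help here and the argument has to stay combinatorial throughout.
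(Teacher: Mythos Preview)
Your plan matches the paper's proof almost step for step: induction on $d$, Corollary \ref{corida} for one direction, Lemma \ref{Danielle} for the other, the sheaf $\aaa=\oo_C\langle 1,t^r\rangle$, and the degree count $\deg\aaa=\#((\sss_P+r)\setminus\sss_P)+\#((\sss_Q-r)\setminus\sss_Q)$. You have also correctly isolated the only genuine difficulty and diagnosed it with the right formula. The paper resolves it via your option (a), not (b): it shows that the minimum of $\rho$ is always attained at some $r$ for which no $s\in\sss_Q$ with $0<s<r$ satisfies $r-s\in\sss_P$.

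The mechanism is a descent. Suppose $r$ minimizes $\rho$ but admits such a bad $s$. Re-partition $E$ with common difference $r':=r-s$ and observe that every right endpoint for $r'$ is already a right endpoint for $r$. Indeed, if $\gamma_P-\ell$ (with $\ell\in G_P$) ends an $r'$-chain one first sees $\ell<r'$ (since $\ell\ge r'$ would force $\ell-r'\in\sss_P$, hence $\ell=(\ell-r')+(r-s)\in\sss_P$), and then $r'-\ell\in\sss_Q$, so $r-\ell=(r'-\ell)+s\in\sss_Q$ and $\gamma_P-\ell$ also ends an $r$-chain; if $\gamma_P+\ell$ (with $\ell\in G_Q$) ends an $r'$-chain then $\ell+r'\in\sss_Q$, hence $\ell+r=(\ell+r')+s\in\sss_Q$, and again it ends an $r$-chain. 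Thus $\rho(r')\le\rho(r)=e_{\min}$, so equality holds, and one replaces $r$ by the strictly smaller $r'$. Iterating, the process terminates at an $r$ with no bad $s$, for which your displayed formula gives $\deg(\oo_C\langle 1,t^r\rangle)=\rho(r)+1=e_{\min}+1$ on the nose. No non-removable base points, non-monomial sections, or smoothness of the scroll are needed.
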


\begin{proof}
We will proceed by induction on $d$. Since $g\geq 1$, then $\gon(C)\geq 2$. So first note that the statement of the theorem for $d=2$ corresponds to the following sentence: $C$ has gonality $2$ if and only if $C'$ is the rational normal curve of degree $g-1$ in $\mathbb{P}^{g-1}$. But this holds from \cite[Thm. 3.4]{KM} and \cite[Thm. 2.1]{Mt}.

For the remainder, write
$$
C^{\prime} = (1:t^{b_2}:\cdots:t^{b_{\delta_P}}:t^{c_1}:\cdots:t^{c_{\delta_Q}})\subset\proje^{g-1}
$$
as in Theorem \ref{prprat} and set
$$
A:=\{0,b_2,\ldots, b_{\delta_P},c_1,\ldots, c_{\delta_Q}\}
$$

Now we prove the result for a general $d$ assuming that it holds for any smaller integer. To check sufficiency, suppose $C^{\prime}$ lies on a $(d-1)$-fold scroll. By Lemma \ref{Danielle}, there is a partition of $A$ into $d-1$ subsets, say $A_1$, $\ldots$, $A_{d-1}$, all forming an arithmetic progression with the same common difference, say $r$. 
Write
\begin{align*}
A_1&=\{0, r, \ldots, e_1\}\\
A_2&=\{a_2, a_2 +r, \ldots, e_2\}\\
      &\ \,  \vdots\\
A_{d-1}&=\{a_{d-1}, a_{d-1}+r, \ldots, e_{d-1}\}
\end{align*}
Supposing also that $d-1$ is the smallest dimension of a scroll on which $C'$ can lie, we see from Lemma \ref{Danielle}  that 
 any ending element $e=e_i$ of $A_i$ satisfies $e+r\notin A$.

Now consider the subsheaf of $\mathcal{K}$ on $C$ defined by
$$
{\cal F}:={\cal O}_C\left\langle 1, t^r\right\rangle
$$
generated by the (global) sections $1,t^r\in k(C)=k(t)$. We claim that 
$$
\deg(\aaa)=d
$$
In order to prove so, note that since $C$ is monomial and $t$ (resp. $t^{-1}$) is a local parameter for $P$ (resp. $Q$) we have that
$$
v_R(\aaa_R)=
\begin{cases}
\sss_P\,\bigcup\, (\sss_P+r) & \text{if}\ R=P\\
\nn & \text{if}\ R\neq P,Q \\
\sss_Q\,\bigcup\,(\sss_Q-r) & \text{if}\ R=Q
\end{cases}
$$
Thus, from what was said in the proof of the prior theorem, we conclude that
$$
\deg(\aaa)=\#(\sss_P+r\setminus\sss_P)+\#(\sss_Q-r\setminus\sss_Q)
$$

Now let $e=e_i\in A_i$ for some $i\in\{1,\ldots,d-1\}$. Consider the following cases:

\noindent (i) if  $e=\gamma_P-\ell$ with $\ell\in G_P$ and $\ell\geq r$ then $\ell-r\in S_P$ since $e+r\not\in A$, which implies that $\ell\in\sss_P+r\setminus\sss_P$;

\noindent (ii) if  $e=\gamma_P-\ell$ with $\ell\in G_P$ and $\ell<r$ then $r-\ell\in S_Q$ since $e+r\not\in A$, which implies that $-\ell\in\sss_Q-r\setminus\sss_Q$;

\noindent (iii) if  $e=\gamma_P+\ell$ with $\ell\in G_Q$ then $\ell+r\in S_Q$ since $e+r\not\in A$, thus $\ell\in\sss_Q-r\setminus\sss_Q$;

Moreover, $-r\in \sss_Q-r\setminus\sss_Q$. Therefore, combining this along with the three statements above we get that $\deg(\aaa)\geq d$. Let us now prove that equality holds.

If $s\in \sss_P$ with $s+r\notin S_P$, then $\gamma_P-(s+r)\in A$ and $\gamma_P-s \notin A$; therefore 
$\gamma_P-(s+r)=e_i$ for some $i$. 

If $s\in S_Q$ with $s-r\notin S_Q$ we have two cases to analyze. First,  if $s>r$, then $\gamma_P+(s-r)\in A$ and $\gamma_P+s\not\in A$; therefore $\gamma_P+(s-r)=e_i$ for some $i$. 

Otherwise, if $s<r$, we break down this case within two new ones.  If $r-s\not\in\sss_P$, then $\gamma_P-(r-s)\in A$ and $\gamma_P+s\notin A$  therefore $\gamma_P-(r-s)=e_i$ for some $i$. 

If not, i.e., if $r-s\in\sss_P$, take a new partition of $A$ with subsets all forming an arithmetic progression with same common difference $r-s$.  Note that one can always do that no matter if no elements of $A$ will be linked to one another. But we claim that, in our case, the new partition splits $A$ into $d-1$ disjoint sets as well. In fact, if $\gamma_P-\ell\in A$ for $\ell\in G_P$ with $\ell\geq r-s$ then $\gamma_P-l+r-s \in A$ because otherwise $l-r+s\in\sss_P$ which implies that  $\ell\in\sss_P$ which cannot occur. So if $\gamma_P-\ell$,  with $\ell\in G_P$, ends a subset of $A$ in the new partition then $\ell < r-s$ and $\gamma_P+r-s-\ell\not\in A$; thus $r-s-\ell\in\sss_Q$ and hence $r-\ell\in\sss_Q$. But this implies that $\gamma_P-\ell$ ends a subset of $A$ in the first partition as well. And if $\gamma_P+\ell$, with $\ell\in G_Q$, ends a subset of $A$ in the new partition  then $\gamma_P+\ell+r-s\not\in A$, which implies that $\ell+r-s\in\sss_Q$ and so $\ell+r\in\sss_Q$; thus $\gamma_P-\ell$ also ends a subset of $A$ in the first partition. It follows that if the new partion subdivide $A$ in $d'$ subsets, then $d'\leq d-1$. But equality should hold otherwise, by Lemma \ref{Danielle}, $C'$ lies on a $d'$-fold scroll with $d'<d-1$, which contradicts the hypothesys. So one may replace $r$ by $r-s$ and restart the proof over and over again until we make sure that $r-s\not\in\sss_P$ for any $s\in\sss_Q$ with $s-r\not\in\sss_Q$. This will surely happen as $r$ decreases each step. 

So $\deg({\cal F})=d$ and $\gon(C)\leq d$. But it has to be $d$, because otherwise $C^{\prime}$ would lie on a scroll of dimension smaller than $d-1$ due to our induction hypothesis.

Conversely, necessity follows from Theorem \ref{thmscr}.(I) and induction.
\end{proof}

\end{document}